\theoremstyle{theorem}
\newtheorem{theorem}{Theorem}
\newtheorem{proposition}{Proposition}
\theoremstyle{definition}
\newtheorem{example}{Example}
\theoremstyle{remark}
\newtheorem{remark}{Remark}
\numberwithin{equation}{section}
\setlist{nolistsep}
\begin{document}
\date{}
\title{Solving Poisson's Equation: Existence, Uniqueness, Martingale Structure, and CLT}
\author[1,2]{Peter W. Glynn}
\author[2]{Alex Infanger}
\affil[1]{Department of Management Science \& Engineering, Stanford University.}
\affil[2]{Institute for Computational \& Mathematical Engineering, Stanford University.}
\maketitle

\begin{abstract} \noindent 
The solution of Poisson’s equation plays a key role in constructing the martingale through which sums of Markov correlated random variables can be analyzed. In this paper, we study two different representations for the solution in countable state space, one based on regenerative structure and the other based on an infinite sum of expectations. We also consider integrability and related uniqueness issues associated with solutions to Poisson’s equation, and provide verifiable Lyapunov conditions to support our theory. Our key results include a central limit theorem and law of the iterated logarithm for Markov dependent sums, under Lyapunov conditions weaker than have previously appeared in the literature.
\end{abstract}

\section{Introduction}
Let $X=(X_n:n\geq 0)$ be an irreducible positive recurrent Markov chain taking values in a finite or countably infinite state space $S$. We let $P=(P(x,y):x,y\in S)$ be the one-step transition matrix of $X$, and let $\pi=(\pi(x):x\in S)$ be its associated (unique) stationary distribution (encoded as a row vector). Given a function $f:S\rightarrow \mathbb R$ (encoded as a column vector), we say that a function $g$ is a solution of \emph{Poisson's equation} (for $f$) if
\begin{align}
(P-I)g = -f.\label{eq11}
\end{align}
Poisson's equation is fundamental to the analysis of the additive functional $S_n(f)\overset{\Delta}{=}\sum_{i=0}^{n-1}f(X_i)$, since, in the presence of integrability,
\begin{align}
g(X_n) + \sum_{i=0}^{n-1}f(X_i)\label{eq12}
\end{align}
should then be a martingale adapted to the filtration $(\mathcal{F}_n:n\geq 0)$, where $\mathcal{F}_n=\sigma(X_j:j\leq n)$. The martingale representation \eqref{eq12} can then be used to advantage in computing expected values (e.g. via optional sampling), and in deriving the law of large numbers, central limit theorem, and law of the iterated logarithm for $S_n(f)$; see, for example, \citet{maigretTheoremeLimiteCentrale1978} and \citet{kurtzCentralLimitTheorem1981}. It is also fundamental to obtaining bounds on the $k$'th moment $E_x |S_n(f)|^k$ (where $E_x(\cdot)$ is the expectation corresponding to the probability $P_x(\cdot)=P(\cdot|X_0=x)$) via the Burkholder-Davis-Gundy inequality; see \citet{hallMartingaleLimitTheory2014}. It is worth noting that \eqref{eq11} also arises implicitly within the optimality equation for average reward/average case stochastic control problems, since the value function under the optimal policy must satisfy \eqref{eq11}; see \citet{rossIntroductionStochasticDynamic2014}.

Given the central importance of \eqref{eq11} within applied probability, this paper is intended to clarify the question of existence and uniqueness of solutions to \eqref{eq11} when $|S|=\infty$. Assuming $g\in L^p(\pi)\overset{\Delta}{=}\{h: \sum_{x\in S}^{}\pi(x)|h(x)|^p<\infty\}$ for $p\geq 1$, \eqref{eq11} implies that $f\in L^1(\pi)$ and $\pi f=0$. If $f\in L_0^1(\pi)=\{h:L^1(\pi): \pi h=0\}$, $|S|<\infty$, and $X$ is aperiodic, it turns out that a solution $g$ to \eqref{eq11} is given either by
\begin{align*}
g_1(x) = E_x\sum_{n=0}^{\tau(z)-1} f(X_n)
\end{align*}
or by 
\begin{align*}
g_2(x) = \sum_{n=0}^{\infty}E_xf(X_n),
\end{align*}
where $\tau(z)=\inf\{n\geq 1: X_n=z\}$ for $z\in S$. In this finite state setting, both $g_1$ and $g_2$ are always finite-valued solutions to \eqref{eq11} and $g_1(x)-g_2(x)$ is constant as a function of $x\in S$.

When $|S|=\infty$, both existence of solutions of \eqref{eq11} and the corresponding uniqueness issues are more subtle. In particular, this paper shows that
\begin{enumerate}[label=\alph*)]
\item While $g_1$ is always a solution to \eqref{eq11} when $f\in L^1(\pi)$, $g_2$ can sometimes fail to be well-defined (see Theorem \ref{thm1} and Example \ref{example2});
\item The set of solutions to \eqref{eq11} can be infinite-dimensional (see Example \ref{example1});
\item When $g_1$ is used in \eqref{eq12}, \eqref{eq12} is guaranteed to be integrable and hence a martingale (Theorem \ref{thm1});
\item The set of functions $g$ for which \eqref{eq12} is a martingale can be infinite-dimensional (Example \ref{example1}), but we show that it becomes one-dimensional when we require a certain uniform integrability property (Theorem \ref{thm6});
\item An approximation to $E_xS_n(f)$ can be developed when $g_1\in L^1(\pi)$ (Theorem \ref{thm3}), and a Lyapunov criterion is provided (Theorem \ref{thm5});
\item The function $g_2$ is well-defined under an extra condition (Theorem \ref{thm4}) that can be verified through an associated Lyapunov criterion (Theorem \ref{thm5});
\item Lyapunov criteria, weaker than those previously developed in the literature, are provided that guarantee the central limit theorem (CLT) and law of the iterated logarithm (LIL) for $S_n(f)$ (Theorems \ref{thm7} and \ref{thm8}).
\end{enumerate}
As we shall see in what follows, the conditions we will develop are close to being minimal. Section \ref{sec2} contains the key results on Poisson's equation, while Section \ref{sec3} discusses two examples that illustrate mathematical subtleties associated with \eqref{eq11}. Section \ref{sec4} describes the CLT and LIL for $S_n(f)$. As an application of our theory, \citet{glynnSolutionsPoissonEquation} uses the characterization $g_1$ of the solution to establish that solutions of Poisson's equation are always monotone when the chain is stochastically monotone and $f$ is monotone, without demanding the summability implicit in $g_2$.

\section{A Regenerative Representation for the Solution and Related Lyapunov Conditions}\label{sec2}
As noted in the Introduction, we assume throughout this paper that $X$ is irreducible and positive recurrent. This implies that
\begin{align}
E_z\tau(z)<\infty\label{eq21}
\end{align}
for each $z\in S$. This, in turn, implies that $E_x \tau(z)<\infty$ for $x,z\in S$, since
\begin{align}
\infty> E_z \tau(z)&\geq E_z \tau(z)I(\tau(x)<\tau(z))\nonumber\\
&\geq P_z\left(\tau(x)<\tau(z)\right)E_x\tau(z).\label{eq22}
\end{align}
Of course, $P_z(\tau(x)<\tau(z))>0$ (because $x$ can not be reached from $z$ if $P_x(\tau(x)<\tau(z))=0$), thereby establishing that $E_x \tau(z)<\infty$.

Similarly, if $f\in L^1(\pi)$, the regenerative structure of $X$ implies that for each $z\in S$, 
\begin{align*}
\sum_{x\in S}^{}\pi(x)|f(x)| = \frac{E_z\sum_{j=0}^{\tau(z)-1}|f(X_j)|}{E_z\tau(z)},
\end{align*}
and hence
\begin{align*}
E_z \sum_{j=0}^{\tau(z)-1}|f(X_j)|<\infty.
\end{align*}
Similarly as with $E_x\tau(z)<\infty$, this implies that
\begin{align}
E_x\sum_{j=0}^{\tau(z)-1} |f(X_j)|<\infty\label{eq23}
\end{align}
for $x,z\in S$.

We conclude that if $f\in L^1(\pi)$, then for each $x,z\in S$,
\begin{align*}
g_z(x) \overset{\Delta}{=} E_x\sum_{j=0}^{\tau(z)-1}f_c(X_j)
\end{align*}
is finite-valued, where $f_c(x)=f(x)-\pi f$ for $x\in S$. Our first result asserts that $g_z=(g_z(x):x\in S)$ is a solution of Poisson's equation for $f_c$, namely it satisfies
\begin{align}
(P-I)g = -f_c.\label{eq24}
\end{align}
It also establishes that for any $y,x\in S$, $g_z$ and $g_y$ differ only by an additive constant. Let $e=(e(x):x\in S)$ be the (constant) function for which $e(x)=1$ for $x\in S$, and put $a\land b\overset{\Delta}{=}\min(a,b)$ for $a,b\in \mathbb{R}$. 

\begin{theorem} \label{thm1} Suppose $f\in L^1(\pi)$. Then, 
\begin{enumerate}[label=\alph*)]
\item For each $z\in S$,  $g_z$ is a finite-valued function that satisfies \eqref{eq24};
\item For each $y,z\in S$, $g_z-g_y=g_z(y)e$.
\end{enumerate}
\end{theorem}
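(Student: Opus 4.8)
The plan is to establish part (a) by a first-step (conditioning) argument and part (b) by a pathwise decomposition of the hitting-time sum. Throughout I may assume the finiteness established in \eqref{eq23}, so that $g_z(x)$ is well-defined and finite for all $x,z\in S$, and I will write the sum in terms of $f_c$, noting that $\pi f_c = 0$ is built into the definition.

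\medskip
\noindent\textbf{Part (a).} First I would verify \eqref{eq24} by conditioning on the first step $X_1$. For a fixed reference state $z$, I decompose the path of $X$ started at $x$ according to its first transition. Writing $\tau(z)=\inf\{n\geq 1: X_n=z\}$, the key observation is that for $x\neq z$ the sum $\sum_{j=0}^{\tau(z)-1} f_c(X_j)$ picks up the term $f_c(X_0)=f_c(x)$ and then, from time $1$ onward, behaves like the same functional restarted at $X_1$; by the Markov property the conditional expectation given $X_1=y$ equals $g_z(y)$ (the tail from the fresh start still runs until the first hit of $z$). This yields, for $x \neq z$,
\begin{align*}
g_z(x) = f_c(x) + \sum_{y\in S} P(x,y)\, g_z(y),
\end{align*}
which rearranges to $((P-I)g_z)(x) = -f_c(x)$. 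The remaining case is $x=z$: here $g_z(z)=0$ by definition (the empty-sum convention, since $\tau(z)\geq 1$ forces the sum $\sum_{j=0}^{\tau(z)-1}$ to include $j=0$ only when we have not yet left—one must be careful here), and I would check the equation at $z$ directly by the same one-step conditioning, again using that restarting at $X_1=y$ gives $g_z(y)$. The interchange of expectation and the (infinite) sum over $y$ is justified by \eqref{eq23}, which gives absolute integrability of the whole functional and hence dominated convergence / Fubini for the conditioning step.

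\medskip
\noindent\textbf{Part (b).} For the relation $g_z-g_y=g_z(y)e$, I would argue pathwise by splitting the excursion from $x$ to $z$ at the first visit to $y$. Comparing $g_z(x)$ and $g_y(x)$, one wants to show their difference does not depend on $x$. From part (a), both $g_z$ and $g_y$ solve Poisson's equation for the same right-hand side $-f_c$, so $(P-I)(g_z-g_y)=0$; that is, $h\overset{\Delta}{=}g_z-g_y$ is harmonic. Since $X$ is irreducible and positive recurrent, bounded (indeed, the relevant) harmonic functions are constant, and the constant is pinned down by evaluating at $x=y$: because $g_y(y)=0$, we get $h(y)=g_z(y)$, whence $h\equiv g_z(y)e$. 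Alternatively, and more self-containedly, I would give a direct regenerative proof by the strong Markov property at $\tau(y)$: started from $x$, the sum up to $\tau(z)$ equals the sum up to $\tau(y)$ plus the sum from $\tau(y)$ to the subsequent $\tau(z)$, and taking expectations and using the strong Markov property isolates $g_y(x)$ plus an $x$-free term equal to $g_z(y)$.

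\medskip
\noindent The step I expect to be the main obstacle is the rigorous justification of the interchange of limits in part (a)—specifically the conditioning identity $E_x\big[\sum_{j=1}^{\tau(z)-1} f_c(X_j)\mid X_1=y\big] = g_z(y)$ together with summing $P(x,y)g_z(y)$ over an infinite state space. This requires the absolute-convergence bound \eqref{eq23} applied not just at $x$ but uniformly enough to pass the expectation through the sum; establishing this via the strong Markov property at time $1$ and a dominated-convergence argument is where the care lies. For part (b), the subtlety is whether I invoke the abstract harmonic-function uniqueness (which itself needs an argument in the countable recurrent setting) or give the explicit excursion-splitting computation; I would favor the explicit computation to keep the argument elementary and to avoid appealing to a constancy-of-harmonic-functions result not yet established in the paper.
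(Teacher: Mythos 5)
Your proposal follows the same two basic ideas as the paper (first-step conditioning for (a), excursion splitting for (b)), but both parts contain genuine gaps. In part (a), the claim that $g_z(z)=0$ ``by definition (the empty-sum convention)'' is false: since $\tau(z)=\inf\{n\geq 1:X_n=z\}$, under $P_z$ the sum $\sum_{j=0}^{\tau(z)-1}f_c(X_j)$ always contains the $j=0$ term, so $g_z(z)$ is the expected sum over a \emph{full regeneration cycle}, not an empty sum. Relatedly, the first-step decomposition actually yields $g_z(x)=f_c(x)+\sum_{y\neq z}P(x,y)g_z(y)$ --- the conditional tail given $X_1=z$ is $0$, not $g_z(z)$ --- so to pass to the full sum $\sum_{y\in S}P(x,y)g_z(y)$ that Poisson's equation \eqref{eq24} requires, you must \emph{prove} that $g_z(z)=0$. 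This is exactly the step the paper supplies via the regenerative ratio formula: $0=\pi f_c=E_z\sum_{j=0}^{\tau(z)-1}f_c(X_j)\,/\,E_z\tau(z)$, whence $g_z(z)=0$. Without this ingredient, your displayed identity in part (a) (and your conditioning identity at $y=z$) is unjustified.

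Both of your routes for part (b) also have gaps. The harmonic-function route fails outright: constancy of harmonic functions for an irreducible positive recurrent chain holds only for \emph{bounded} (or otherwise controlled) harmonic functions, and you have no a priori bound on $h=g_z-g_y$; indeed the paper's Example \ref{example1} exhibits an irreducible, aperiodic, positive recurrent chain with infinitely many linearly independent non-constant (unbounded) harmonic functions, so ``harmonic $\Rightarrow$ constant'' is precisely the assertion that fails in this setting, and invoking it here is circular since boundedness of $g_z-g_y$ is what you are trying to prove. Your ``direct regenerative'' alternative is the right idea but is wrong as stated on the event $\{\tau(z)<\tau(y)\}$: started from $x$, the sum up to $\tau(z)$ does \emph{not} decompose as (sum up to $\tau(y)$) plus (sum from $\tau(y)$ to the next hit of $z$) when $z$ is hit first, so taking expectations does not isolate $g_y(x)+g_z(y)$. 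The paper repairs this by splitting at $\tau(y)\wedge\tau(z)$, writing identity \eqref{eq26} both for $g_z(x)$ and (with roles of $y,z$ exchanged) for $g_y(x)$, subtracting to obtain \eqref{eq27}, and then setting $x=y$ --- using $g_y(y)=0$ (again via the ratio formula) and $P_y(\tau(z)<\tau(y))>0$ from irreducibility --- to eliminate the cross term $g_y(z)+g_z(y)$. Your argument needs this additional bookkeeping on the competing hitting times before it is correct.
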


\begin{proof} We have already established that $g_z$ is finite-valued. For $x\in S$, conditioning on $X_1$ shows that
\begin{align}
g_z(x) &= E_x\sum_{j=0}^{\tau(z)-1}f_c(X_j)\nonumber\\
&= f_c(x) + \sum_{y\neq z}^{} P(x,y)g_z(y).\label{eq25}
\end{align}
But $\pi f_c=0$, and the regenerative structure of $X$ implies that
\begin{align*}
\pi f_c = \frac{E_z \sum_{j=0}^{\tau(z)-1}f_c(X_j)}{E_z\tau(z)},
\end{align*}
so that $g_z(z)=0$. Hence \eqref{eq25} yields the identity
\begin{align*}
g_z(x)=f_c(x) + \sum_{y\in S}^{}P(x,y)g_z(y),
\end{align*}
which is $a)$. As for $b)$, note that
\begin{align}
g_z(x) = E_x\smashoperator{\sum_{j=0}^{(\tau(y)\land \tau(z))-1}}f_c(X_j) + P_x(\tau(y)<\tau(z)) g_z(y).\label{eq26}
\end{align}
Since the identity also holds for $g_y(x)$ (with $y$ playing the role of $z$), we find that
\begin{align}
g_z(x) - g_y(x) = g_z(y) - P_x(\tau(z)<\tau(y))(g_y(z)+g_z(y))\label{eq27}
\end{align}
for $x\in S$. If we set $x=y$ in \eqref{eq27} and note that irreducibility implies that $P_z(\tau(z)<\tau(y))>0,$ we find that $g_y(z)-g_z(y)=0$, yielding b). 
\end{proof}
\begin{remark} Theorem \ref{thm1} is a special case of Theorem 1 and 2 of \citet{dermanSolutionCountableSystem1967}; see also \citet{schaufelePotentialTheoreticProof1967}. Our proof is easier because we assume irreducibility.
\end{remark}
We say that $(M_n:n\geq 0)$ is a $P_x-$\emph{martingale} adapted to $(\mathcal{F}_n:n\geq 0)$ if $(M_n:n\geq 0)$ is adapted to $(\mathcal{F}_n:n\geq 0)$, $E_x|M_n|<\infty$, and $E_x[M_{n+1}|\mathcal{F}_n]=M_n$ a.s. for $n\geq 0$.

\begin{theorem} \label{thm2} Suppose $f\in L^1(\pi)$. Then, for all $x,z\in S$, 
\begin{align}
g_z(X_n) + \sum_{j=0}^{n-1}f_c(X_j)\label{eq28}
\end{align}
is a $P_x$-martingale adapted to $(\mathcal{F}_n:n\geq 0)$. 
\end{theorem}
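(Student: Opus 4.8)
The plan is to verify the three defining properties of a $P_x$-martingale directly: adaptedness, integrability, and the one-step conditional expectation identity. Adaptedness is immediate, since $M_n \overset{\Delta}{=} g_z(X_n) + \sum_{j=0}^{n-1}f_c(X_j)$ is a deterministic function of $X_0,\dots,X_n$, hence $\mathcal{F}_n$-measurable.

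For integrability, I would bound $E_x|M_n|$ by $E_x|g_z(X_n)| + \sum_{j=0}^{n-1}E_x|f_c(X_j)|$. The second term is controlled by the observation, already established in \eqref{eq23}, that $E_x\sum_{j=0}^{\tau(z)-1}|f_c(X_j)|<\infty$, which by the regenerative structure gives finiteness of $E_x|f_c(X_j)|$ for each fixed $j$ (any finite partial sum over a deterministic horizon is dominated by summing over enough regenerative cycles). For the first term, I would use the representation $g_z(X_n) = E_{X_n}\sum_{j=0}^{\tau(z)-1}f_c(X_j)$ together with the Markov property to write $E_x|g_z(X_n)|$ in terms of a conditional expectation of the tail of the cycle sum starting from time $n$; finiteness then follows from \eqref{eq23} applied appropriately. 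The main work here is confirming that each of these expectations is finite for every fixed $n$ using only $f\in L^1(\pi)$.

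The heart of the argument is the conditional expectation identity. I would compute
\begin{align*}
E_x[M_{n+1}\mid \mathcal{F}_n] = E_x[g_z(X_{n+1})\mid \mathcal{F}_n] + \sum_{j=0}^{n}f_c(X_j).
\end{align*}
By the Markov property, $E_x[g_z(X_{n+1})\mid \mathcal{F}_n] = \sum_{y\in S}P(X_n,y)g_z(y) = (Pg_z)(X_n)$. Now I invoke Theorem \ref{thm1}a), which asserts that $g_z$ solves \eqref{eq24}, i.e. $(P-I)g_z = -f_c$, so that $(Pg_z)(X_n) = g_z(X_n) - f_c(X_n)$. Substituting and recognizing that the $-f_c(X_n)$ cancels the newly added $f_c(X_n)$ term in the sum yields $E_x[M_{n+1}\mid\mathcal{F}_n] = g_z(X_n) + \sum_{j=0}^{n-1}f_c(X_j) = M_n$, as required.

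The step I expect to be the main obstacle is the rigorous justification of interchanging the conditional expectation with the (possibly infinite) sum over $y\in S$ in $E_x[g_z(X_{n+1})\mid\mathcal{F}_n] = (Pg_z)(X_n)$, since $g_z$ need not be integrable with respect to $P(X_n,\cdot)$ in an absolute sense without care. I would address this by noting that the pointwise identity $(Pg_z)(x) = g_z(x) - f_c(x)$ from Theorem \ref{thm1}a) already encodes the absolute convergence of $\sum_y P(x,y)g_z(y)$ implicitly through the finite-valuedness of $g_z$ and the conditioning-on-$X_1$ derivation in \eqref{eq25}; this lets me apply the Markov property to a function whose one-step average is well-defined, completing the identity cleanly.
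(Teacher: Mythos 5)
Your proposal is correct, and it isolates the same crux as the paper: adaptedness and the one-step identity $E_x[M_{n+1}\mid\mathcal{F}_n]=M_n$ are routine consequences of Theorem \ref{thm1}a) and the Markov property, and the real content is the $P_x$-integrability of \eqref{eq28}. Where you differ is in how that integrability is established. The paper bounds $E_x|f_c(X_n)|$ by stationarity, via $\pi(x)E_x|f_c(X_n)|\leq \sum_{y}\pi(y)E_y|f_c(X_n)|=\pi|f_c|<\infty$, and bounds $E_x|g_z(X_n)|$ by conditioning on the first hitting time of $z$ to derive a discrete renewal equation, \eqref{eq211}--\eqref{eq212}, whose forcing terms $b_n$ are dominated using \eqref{eq23}. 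Your route replaces both steps with direct cycle-covering dominations: since every regeneration cycle has length at least one, the deterministic horizon $\{0,\dots,n\}$ is covered by the first $n+1$ cycles, so $\sum_{j=0}^{n-1}E_x|f_c(X_j)|$ and $E_x|g_z(X_n)|\leq E_x\sum_{j=n}^{\beta_n(z)-1}|f_c(X_j)|$ (where $\beta_n(z)=\inf\{j>n:X_j=z\}$) are each bounded by one cycle sum started from $x$ plus $n$ cycle sums started from $z$, all finite by \eqref{eq23} and the strong Markov property. This is more elementary, avoiding renewal theory entirely, though it is also cruder: the paper's renewal-equation formulation is reused almost verbatim in the proof of Theorem \ref{thm3} (as \eqref{eq216}), where the finer structure is actually needed, which is presumably why the paper sets it up that way. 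One point of imprecision in your final paragraph: finite-valuedness of $g_z$ alone does \emph{not} encode absolute convergence of $\sum_y P(x,y)g_z(y)$; what does is the domination $|g_z|\leq k_z$ with $k_z(x)=E_x\sum_{j=0}^{\tau(z)-1}|f_c(X_j)|$, for which the conditioning-on-$X_1$ computation behind \eqref{eq25}, applied to $|f_c|$ and using \eqref{eq23}, gives $(Pk_z)(x)<\infty$. Alternatively, and more simply, once you have proved $E_x|g_z(X_{n+1})|<\infty$, the identity $E_x[g_z(X_{n+1})\mid\mathcal{F}_n]=(Pg_z)(X_n)$ holds $P_x$-a.s.\ with the defining series absolutely convergent almost surely, so no separate interchange argument is required; your worry there dissolves once integrability is in hand.
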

\begin{proof} The only non-trivial issue that needs to be addressed is the $P_x$-integrability of the martingale. Note that for $n\geq 0$, because $f\in L^1(\pi)$,
\begin{align}
\pi(x)E_x|f_c(X_n)|\leq \sum_{y\in S}^{}\pi(y)E_y|f_c(X_n)| = \sum_{y\in S}^{}\pi(y)|f_c(y)|<\infty
\end{align}
due to stationarity, and hence $f_c(X_n)$ is $P_x$-integrable for $n\geq 0$.

As for the integrability of $g_z(X_n)$, we observe that for $x\neq z$, 
\begin{align}
a_n(x)\overset{\Delta}{=}E_x|g_z(X_n)| = E_x|E_x\sum_{j=n}^{\tau(z)-1}f_c(X_j)|I(\tau(z)>n)+ \sum_{j=1}^{n}P_x(\tau(z)=j) a_{n-j}(z).\label{eq211}
\end{align}
As for $(a_n(z):n\geq 0)$, it satisfies the renewal equation 
\begin{align*}
a_n(z) = E_z|E_z\sum_{j=n}^{\tau(z)-1}f_c(X_j)|I(\tau(z)>n) + \sum_{j=1}^{n} P_z(\tau(z)=j)a_{n-j}(z)
\end{align*}
for $n\geq 0$, the solution of which is 
\begin{align}
a_n(z) = \sum_{j=0}^{n}P_z(X_j=z)b_{n-j}(z),\label{eq212}
\end{align}
where
\begin{align*}
b_n(x)&\leq E_x\sum_{j=n}^{\tau(z)-1}|f_c(X_j)|I(\tau(z)>n)\\
&\leq E_x\sum_{j=0}^{\tau(z)-1}|f_c(X_j)|<\infty
\end{align*}
for each $x\in S$, on account of \eqref{eq23}; see Chapter 13 of \citet{fellerIntroductionProbabilityTheory1968} for a discussion of discrete-time renewal theory. Hence, $(b_n(x):n\geq 0)$ is a finite-valued sequence for each $x\in S$, so that \eqref{eq211} and \eqref{eq212} imply the finiteness of $(a_n(x):n\geq 0)$ for $x\in S$, yielding the $P_x$-integrability of $g_z(X_n)$. 
\end{proof}

\begin{remark} Our result shows that whenever $f\in L^1(\pi)$, \eqref{eq28} is a $P_x$-martingale. Discussion of the martingale \eqref{eq28} appears elsewhere (e.g. in \citet{kurtzCentralLimitTheorem1981} and \cite{makowskiPoissonEquationCountable2002}), but without a simple sufficient condition for integrability.
\end{remark}

\begin{remark} For $\mu=(\mu(x):x\in S)$ a probability on $S$, let $P_\mu(\cdot)\overset{\Delta}{=}\sum_{x\in S}^{}\mu(x)P_x(\cdot)$ be the probability on the path-space of $X$ under which $X$ has initial distribution $\mu$, and let $E_\mu (\cdot)$ be its corresponding expectation operator. Then \eqref{eq28} is guaranteed to be a $P_\mu$-martingale provided that $\mu$ is finitely supported. However \eqref{eq28} is, in general, not integrable for infinitely supported $\mu$ (e.g. $\mu=\pi$).
\end{remark}
As a consequence of Theorem \ref{thm2},
\begin{align}
\sum_{j=0}^{n-1}E_xf(X_j) = n\pi f + g_z(x) - E_xg_z(X_n)\label{eq213}
\end{align}
for $x,z\in S$. To further simplify \eqref{eq213}, we can apply the following result.

\begin{theorem} \label{thm3} Suppose that $f\in L^1(\pi)$ and that there exists $z\in S$ for which 
\begin{align}
E_z \sum_{j=0}^{\tau(z)-1}j|f_c(X_j)|<\infty.\label{eq214}
\end{align}
Then, $g_y\in L^1(\pi)$ for all $y\in S$, and $\eqref{eq28}$ is a $P_\pi$-martingale adapted to $(\mathcal{F}_n:n\geq 0)$. Furthermore, if $X$ is aperiodic, then
\begin{align}
E_x g_z(X_n)\rightarrow \pi g_z=\frac{E_z\sum_{j=0}^{\tau(z)-1}(j+1)f_c(X_j)}{E_z\tau(z)}\label{eq215}
\end{align}
as $n\rightarrow\infty$ for $x\in S$.
\end{theorem}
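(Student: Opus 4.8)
The plan is to take the three claims in order; the first two are quick consequences of the regenerative (ratio) formula and the integrability bound \eqref{eq23}, while the convergence in \eqref{eq215} is where the real work lies. For the claim that $g_y\in L^1(\pi)$, I would apply the regenerative formula $\pi h = E_z\sum_{j=0}^{\tau(z)-1}h(X_j)/E_z\tau(z)$ (valid for nonnegative $h$, with both sides allowed to be infinite) to $h=|g_z|$, reducing the problem to showing $E_z\sum_{j=0}^{\tau(z)-1}|g_z(X_j)|<\infty$. On $\{\tau(z)>j\}$ the Markov property gives $g_z(X_j)=E_z[\sum_{k=j}^{\tau(z)-1}f_c(X_k)\mid\mathcal F_j]$, so that $|g_z(X_j)|I(\tau(z)>j)\le E_z[\sum_{k=j}^{\tau(z)-1}|f_c(X_k)|\mid\mathcal F_j]I(\tau(z)>j)$. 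Taking $E_z$, summing over $j$, and interchanging the nonnegative double sum via $\sum_{j=0}^{\tau(z)-1}\sum_{k=j}^{\tau(z)-1}=\sum_{k=0}^{\tau(z)-1}(k+1)$ would bound the quantity by $E_z\sum_{k=0}^{\tau(z)-1}(k+1)|f_c(X_k)|$, which is finite by \eqref{eq214} and \eqref{eq23}. Theorem \ref{thm1}(b) then gives $g_y=g_z-g_z(y)e\in L^1(\pi)$ for every $y$.

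For the $P_\pi$-martingale claim, integrability of \eqref{eq28} under $P_\pi$ is immediate from stationarity: $E_\pi|g_z(X_n)|=\pi|g_z|<\infty$ and $E_\pi|f_c(X_j)|=\pi|f_c|<\infty$. For the martingale identity I would recompute the one-step conditional expectation under $P_\pi$ directly: because $E_\pi\sum_yP(X_n,y)|g_z(y)|=\pi|g_z|<\infty$, the sum converges absolutely a.s.\ and the Markov property gives $E_\pi[g_z(X_{n+1})\mid\mathcal F_n]=(Pg_z)(X_n)$, whereupon Poisson's equation $(P-I)g_z=-f_c$ collapses $E_\pi[M_{n+1}\mid\mathcal F_n]$ to $M_n$. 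As the Remark after Theorem \ref{thm2} notes, the content here is precisely the $P_\pi$-integrability, which \eqref{eq214} supplies.

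Finally, for \eqref{eq215} I would first identify the limit by applying the regenerative formula to $h=g_z$ (now legitimate) and repeating the double-sum interchange without absolute values, yielding $\pi g_z=E_z\sum_{k=0}^{\tau(z)-1}(k+1)f_c(X_k)/E_z\tau(z)$. For the convergence $E_xg_z(X_n)\to\pi g_z$, I would pass, via the Markov property, to the forward representation $E_xg_z(X_n)=E_x\sum_{k=n}^{\sigma_n-1}f_c(X_k)$ with $\sigma_n=\inf\{k>n:X_k=z\}$, and condition on the first passage $\tau(z)$, restarting at $z$, to obtain the renewal equation $u_n(z)=c_n(z)+\sum_{j=1}^nP_z(\tau(z)=j)u_{n-j}(z)$, where $u_n(z)=E_zg_z(X_n)$ and $c_n(z)=E_z[\sum_{k=n}^{\tau(z)-1}f_c(X_k)I(\tau(z)>n)]$. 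Since $\sum_n|c_n(z)|\le E_z\sum_{k=0}^{\tau(z)-1}(k+1)|f_c(X_k)|<\infty$, the solution $u_n(z)=\sum_{j=0}^nP_z(X_j=z)c_{n-j}(z)$ together with the discrete key renewal theorem (using aperiodicity, so that $P_z(X_j=z)\to1/E_z\tau(z)$; see Chapter 13 of \citet{fellerIntroductionProbabilityTheory1968}) gives $u_n(z)\to\sum_mc_m(z)/E_z\tau(z)=\pi g_z$. The analogous decomposition $u_n(x)=c_n(x)+\sum_{j=1}^nP_x(\tau(z)=j)u_{n-j}(z)$, with $c_n(x)\to0$ by dominated convergence (via \eqref{eq23} and $\tau(z)<\infty$ a.s.) and $\sum_jP_x(\tau(z)=j)=1$, then gives $u_n(x)\to\pi g_z$ for all $x$.

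The main obstacle is this last convergence. Unlike the bounded-function case, it does not follow from total-variation convergence of $P^n(x,\cdot)$ to $\pi$, because $g_z$ need not be bounded and one must rule out escape of mass; hypothesis \eqref{eq214} is exactly what renders the signed renewal kernel $(c_n(z):n\geq 0)$ absolutely summable, which is what the key renewal theorem requires. The delicate points are therefore the absolute summability of $(c_n(z):n\geq 0)$ and accommodating its sign, e.g.\ by splitting into positive and negative parts before invoking the renewal theorem.
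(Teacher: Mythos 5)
Your proposal is correct and follows essentially the same route as the paper: the cycle (regenerative ratio) formula plus conditional Jensen and the double-sum interchange $\sum_{j}\sum_{k\geq j} = \sum_k (k+1)$ to get $g_z \in L^1(\pi)$, and a renewal-equation representation of $E_x g_z(X_n)$ whose kernel is made absolutely summable by \eqref{eq214}, followed by dominated convergence using $P_z(X_n=z)\to\pi(z)$ under aperiodicity. The only differences are cosmetic: you run the renewal argument in two stages (first at $z$, then at general $x$) where the paper writes the single composed identity \eqref{eq216}, and you spell out the $P_\pi$-martingale verification that the paper dismisses as obvious.
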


\begin{proof} As in the argument leading to \eqref{eq211} and \eqref{eq212}, we find that 
\begin{align}
\tilde a_n(x) = E_x g_z(X_n) = \tilde b_n(x) + \sum_{k=1}^{n}P_x(\tau(z)=k)\sum_{j=0}^{n-k}\tilde b_j(z)P_z(X_{n-k-j}=z)\label{eq216}
\end{align}
where
\begin{align*}
\tilde b_n(x) = E_x\sum_{j=n}^{\tau(z)-1}f_c(X_j)I(\tau(z)>n).
\end{align*}
As a consequence of the aperiodicity, $P_z(X_n=z)\rightarrow \pi(z)$ as $n\rightarrow\infty$. Also,
\begin{align}
\sum_{n=0}^{\infty}|\tilde b_n(z)|&\leq E_z\sum_{n=0}^{\infty}\sum_{j=n}^{\tau(z)-1}|f_c(X_j)| I(\tau(z)>n)\nonumber\\
&=E_z \sum_{n=0}^{\tau(z)-1}\sum_{j=n}^{\tau(z)-1}|f_c(X_j)|\nonumber\\
&= E_z \sum_{j=0}^{\tau(z)-1} |f_c(X_j)| \sum_{n=0}^{j}1\nonumber\\
&=E_z \sum_{j=0}^{\tau(z)-1} (j+1)|f_c(X_j)|<\infty.\label{eq217}
\end{align}
With \eqref{eq217} in hand, we can apply the Dominated Convergence Theorem to \eqref{eq216}, thereby yielding \eqref{eq215}.

Furthermore, the regenerative structure of $X$ implies that if $\beta_n(z)=\inf\{j>n: X_j=z\}$, then 
\begin{align}
E_z\tau(z)E_\pi|g_z(X_0)| &= E_z \sum_{n=0}^{\tau(z)-1}|g_z(X_n)|\nonumber\\
&= E_z \sum_{n=0}^{\infty}|E_z \sum_{j=n}^{\beta_n(z)-1}f_c(X_j)|I(\tau(z)>n)\nonumber\\
&\leq E_z\sum_{n=0}^{\infty} \sum_{j=n}^{\beta_n(z)-1}|f_c(X_j)| I(\tau(z)>n)\nonumber\\
&= E_z \sum_{n=0}^{\infty}\sum_{j=n}^{\tau(z)-1}|f_c(X_j)|I(\tau(z)>n)\nonumber\\
&= E_z \sum_{n=0}^{\tau(z)-1}\sum_{j=n}^{\tau(z)-1}|f_c(X_j)|,\label{eq218-new}
\end{align}
which equals the right-hand side of \eqref{eq217}. It follows that $g_z\in L^1(\pi)$. But $g_y$ is equal to $g_z$ (up to an additive constant), so $g_y\in L^1(\pi)$ for $y\in S$. Obviously, if $g_y\in L^1(\pi)$, then $g_y(X_n)+\sum_{j=1}^{n-1}f_c(X_j)$ is a $P_\pi$-martingale, concluding the argument.

The expression for $\pi g_z$ follows from the same argument as that leading to \eqref{eq218-new}. 
\end{proof}

Under the conditions of Theorem \ref{thm3}, we see that
\begin{align}
E_x \sum_{j=0}^{n-1}f(X_j) = n\pi f + g_z(x)- \pi g_z + o(1) \label{eq218}
\end{align}
as $n\rightarrow\infty$, where $o(1)$ represents a sequence $(c_n:n\geq 0)$ with the property that $c_n\rightarrow 0$ as $n\rightarrow\infty$. Given the expression for $\pi g_z$, it is evident that Theorem \ref{thm3}'s hypotheses are close to necessary.

The next result shows that \eqref{eq214} is a ``solidarity property'', in the sense that if it holds for one $z\in S$, then it holds for each $y\in S$.

\begin{proposition} \label{prop1} Suppose $f\in L^1(\pi)$ and \eqref{eq214} is valid for some $z\in S$. Then, \eqref{eq214} holds for all $z\in S$.
\end{proposition}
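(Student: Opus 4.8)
The plan is to reduce the statement to a claim about the nonnegative function $\phi=|f_c|$ and then transfer finiteness between states through the stationary distribution. Since $f\in L^1(\pi)$ gives $\phi\in L^1(\pi)$, the bound \eqref{eq23} already yields $E_x\sum_{j=0}^{\tau(w)-1}\phi(X_j)<\infty$ for all $x,w\in S$; in particular $E_z\sum_{j=0}^{\tau(z)-1}\phi(X_j)<\infty$, so condition \eqref{eq214} at $z$ is equivalent to finiteness of $c_z\overset{\Delta}{=}E_z\sum_{j=0}^{\tau(z)-1}(j+1)\phi(X_j)$. Passing to $\phi\ge 0$ is the key simplification, since it removes every integrability and cancellation concern and lets me apply Tonelli freely.

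First I would record, for each $w\in S$, the identity $c_w=E_w\tau(w)\,E_\pi G_w$, where $G_w(x)\overset{\Delta}{=}E_x\sum_{j=0}^{\tau(w)-1}\phi(X_j)\ge 0$. This is exactly the computation running through \eqref{eq217}--\eqref{eq218-new}, carried out with $\phi$ in place of $|f_c|$: the regenerative representation $E_\pi G_w=(E_w\tau(w))^{-1}E_w\sum_{n=0}^{\tau(w)-1}G_w(X_n)$, followed by the strong Markov identity $E_w[G_w(X_n)I(n<\tau(w))]=E_w[I(n<\tau(w))\sum_{j=n}^{\tau(w)-1}\phi(X_j)]$ and an interchange of summation order, gives $E_w\sum_{n=0}^{\tau(w)-1}G_w(X_n)=E_w\sum_{j=0}^{\tau(w)-1}(j+1)\phi(X_j)=c_w$. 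The point is that because $\phi\ge 0$, every inequality in \eqref{eq218-new} becomes an equality. Together with $E_w\tau(w)<\infty$ from \eqref{eq21}, this shows that \eqref{eq214} holds at $w$ if and only if $E_\pi G_w<\infty$.

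It then remains to transfer finiteness of $E_\pi G_z$ to $E_\pi G_y$ for an arbitrary $y\in S$. Fixing $z$ with $E_\pi G_z<\infty$ and writing $T=\tau(y)\land\tau(z)$, a first-passage decomposition of the excursion from $x$ to $y$, together with the strong Markov property at $T$ on the event $\{\tau(z)<\tau(y)\}$ (where $X_T=z$), yields $G_y(x)=E_x\sum_{j=0}^{T-1}\phi(X_j)+P_x(\tau(z)<\tau(y))\,G_y(z)$. Since $T\le\tau(z)$ and $\phi\ge 0$, the first term is at most $G_z(x)$, while $P_x(\tau(z)<\tau(y))\le 1$, so $G_y(x)\le G_z(x)+G_y(z)$ pointwise, where $G_y(z)=E_z\sum_{j=0}^{\tau(y)-1}\phi(X_j)$ is a finite constant by \eqref{eq23}. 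Integrating against $\pi$ gives $E_\pi G_y\le E_\pi G_z+G_y(z)<\infty$, whence \eqref{eq214} holds at $y$.

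The main obstacle is the identity of the second paragraph: one must justify the regenerative representation applied to $G_w$ and the accompanying Tonelli interchanges. This is where nonnegativity of $\phi$ is essential — it guarantees every sum and expectation is well-defined in $[0,\infty]$ and turns the single inequality of \eqref{eq218-new} into the exact equality $c_w=E_w\tau(w)\,E_\pi G_w$ that I need in both directions. Once that identity is in place, the pointwise comparison $G_y\le G_z+G_y(z)$ is elementary and the conclusion follows immediately.
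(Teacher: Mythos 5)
Your proof is correct, and it departs from the paper's in an interesting way. The first step is shared: your identity $E_w\sum_{j=0}^{\tau(w)-1}(j+1)|f_c(X_j)|=E_w\tau(w)\cdot\pi G_w$ (your $G_w$ is the paper's $k_w$) is exactly the opening display of the paper's proof, obtained from Tonelli, the Markov property, and the regenerative ratio formula. The divergence is in the transfer step. The paper uses the identity only in the forward direction (to conclude $k_z\in L^1(\pi)$) and then bounds $E_y\sum_{j=0}^{\tau(y)-1}(j+1)|f_c(X_j)|$ directly, splitting each inner sum $\sum_{k=j}^{\tau(y)-1}|f_c(X_k)|$ at the first return $\beta_j(z)$ to $z$ after time $j$; this yields a three-term, indicator-laden decomposition controlled by $E_y\sum_j k_z(X_j)$ (finite by \eqref{eq23} applied to $k_z\in L^1(\pi)$) and by $E_y\tau(y)\cdot E_z\sum_{k=0}^{\tau(y)-1}|f_c(X_k)|$. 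You instead observe that, since every term is nonnegative, the ratio identity is an exact equality in $[0,\infty]$ and hence an equivalence: \eqref{eq214} holds at $w$ if and only if $\pi G_w<\infty$. This reduces the solidarity claim to a statement about $\pi$-integrability of the functions $G_w$, which you transfer by the elementary pointwise bound $G_y\le G_z+G_y(z)$, itself a first-passage decomposition of the same form as \eqref{eq26}. What your route buys is economy and transparency: the excursion surgery is replaced by a one-line domination integrated against $\pi$, and the role of the two-sided identity is made explicit. What the paper's route buys is that it never needs the reverse implication, keeping the whole estimate at the level of a single excursion from $y$. Both arguments ultimately rest on the same ingredients: Tonelli, the strong Markov property, the ratio formula, and \eqref{eq23}.
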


\begin{proof} We note that the regenerative structure of $X$ implies that
\begin{align*}
\infty>E_z \sum_{j=0}^{\tau(z)-1}(j+1) |f_c(X_j)| &= E_z\sum_{k=0}^{\tau(z)-1}\sum_{j=k}^{\tau(z)-1}|f_c(X_j)|\\
&=E_z\sum_{\ell=0}^{\tau(z)-1}k_z(X_\ell) = E_z\tau(z)\cdot \pi k_z
\end{align*}
where $k_z(x)=E_x\sum_{j=0}^{\tau(z)-1} |f_c(X_j)|$ for $x\in S$. If $y\neq z$, relation \eqref{eq23} implies that $E_y\sum_{\ell=0}^{\tau(y)-1}|k_z(X_\ell)|<\infty$. Also,
\begin{align*}
E_y \sum_{j=0}^{\tau(y)-1}(j+1)|f_c(X_j)|&= E_y\sum_{j=0}^{\tau(y)-1}\sum_{k=j}^{\tau(y)-1}|f_c(X_k)|\\
&=E_y\sum_{j=0}^{\tau(y)-1}I(\beta_j(z)\geq\tau(y))\sum_{k=j}^{\tau(y)-1}|f_c(X_k)|\\
& \qquad + E_y \sum_{j=0}^{\tau(y)-1}I(\beta_j(z)<\tau(y))\sum_{k=j}^{\beta_j(z)-1}|f_c(X_k)|\\
& \qquad + E_y \sum_{j=0}^{\tau(y)-1}I(\beta_j(z)<\tau(y))\sum_{k=\beta_j(z)}^{\tau(y)-1}|f_c(X_k)|\\
&\leq E_y\sum_{j=0}^{\tau(y)-1}\sum_{k=j}^{\beta_j(z)-1}|f_c(X_k)|\\
&\qquad + E_y\sum_{j=0}^{\tau(y)-1}I(\beta_j(z)<\tau(y))E_y[\sum_{k=\beta_j(z)}^{\tau(y)-1}|f_c(X_k)|\ |X_0,...,X_{\beta_j(z)}]\\
&= E_y\sum_{j=0}^{\tau(y)-1}k_z(X_j) + E_y\sum_{j=0}^{\tau(y)-1}I(\beta_j(z)<\tau(y))E_z\sum_{j=0}^{\tau(y)-1}|f_c(X_k)|\\
&\leq E_y \sum_{j=0}^{\tau(y)-1}k_z(X_j) + E_y \tau(y)\cdot E_z\sum_{j=0}^{\tau(y)-1}|f_c(X_k)|<\infty,
\end{align*}
where we have used \eqref{eq23} in the last line, thereby proving the result. 
\end{proof}

Our final result in this section on the behavior of $\sum_{j=0}^{n}E_xf_c(X_j)$ concerns the development of a regenerative criterion that ensures the validity of 
\begin{align}
\sum_{j=0}^{\infty}|E_xf_c(X_j)|<\infty\label{eq219}
\end{align}
for $x\in S$.

\begin{theorem}\label{thm4} Suppose that $f\in L^1(\pi)$ and that there exists $z\in S$ for which \eqref{eq214} holds and $E_z\tau(z)^2<\infty$. If $X$ is aperiodic, then \eqref{eq219} holds for all $x\in S$. 
\end{theorem}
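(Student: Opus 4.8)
The plan is to reduce the claim to a single renewal-theoretic estimate on the return probabilities $u_n \overset{\Delta}{=} P_z(X_n=z)$, and then to feed the two hypotheses into the two distinct places where they are needed. Throughout I would write $r_n(x) = E_x[f_c(X_n)I(\tau(z)>n)]$ and $h_n = E_z f_c(X_n)$, and set $\mu = E_z\tau(z)$, so that $\pi(z)=1/\mu$.

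First I would pass from a general $x$ to $x=z$. Conditioning on the first hitting time of $z$ gives, for $x\neq z$,
\begin{align*}
E_x f_c(X_n) = r_n(x) + \sum_{k=1}^{n}P_x(\tau(z)=k)\,h_{n-k}.
\end{align*}
Taking absolute values, summing over $n$, and applying Tonelli to the convolution term (whose $x$-dependent weights sum to $P_x(\tau(z)<\infty)=1$) yields $\sum_n|E_xf_c(X_n)| \leq E_x\sum_{n=0}^{\tau(z)-1}|f_c(X_n)| + \sum_m|h_m|$, where the first term is finite by \eqref{eq23}. Hence it suffices to prove $\sum_m|h_m|<\infty$.

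For $x=z$ the same first-passage decomposition shows that $h_n$ solves the renewal equation $h_n = r_n(z) + \sum_{k=1}^n P_z(\tau(z)=k)h_{n-k}$, whose solution is the convolution $h_n = \sum_{j=0}^n u_{n-j}\,r_j(z)$. I would split $u_{n-j}=\pi(z)+(u_{n-j}-\pi(z))$. Since $g_z(z)=0$ we have $\sum_{j\geq 0}r_j(z)=0$ (the interchange being justified by \eqref{eq23}), so the $\pi(z)$-part telescopes into $\pi(z)\sum_{j=0}^n r_j(z)=-\pi(z)T_n$ with $T_n=\sum_{j>n}r_j(z)$, and $\sum_n|T_n|\leq\sum_j j\,|r_j(z)|\leq E_z\sum_{j=0}^{\tau(z)-1}j\,|f_c(X_j)|<\infty$ by \eqref{eq214}. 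For the remainder, the triangle inequality and Tonelli give $\sum_n\bigl|\sum_{j=0}^n(u_{n-j}-\pi(z))r_j(z)\bigr| \leq \bigl(\sum_m|u_m-\pi(z)|\bigr)\bigl(\sum_j|r_j(z)|\bigr)$, the second factor being finite by \eqref{eq23}. Thus everything comes down to the bound $\sum_m|u_m-\pi(z)|<\infty$.

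This last estimate is the main obstacle, and it is exactly where aperiodicity and $E_z\tau(z)^2<\infty$ enter. I would establish it by the generating-function route: with $F(s)=\sum_k P_z(\tau(z)=k)s^k$ and $U(s)=\sum_n u_n s^n = 1/(1-F(s))$, one verifies the identity
\begin{align*}
U(s)-\frac{1/\mu}{1-s} = \frac{\psi(s)}{\mu\,\phi(s)},
\end{align*}
where $\phi(s)=\sum_j P_z(\tau(z)>j)s^j=(1-F(s))/(1-s)$ and $\psi(s)=\sum_j s^j\sum_{k>j}P_z(\tau(z)>k)$. The series $\psi$ has nonnegative, absolutely summable coefficients precisely because $\psi(1)=\sum_k k\,P_z(\tau(z)>k)=\tfrac12(E_z\tau(z)^2-\mu)<\infty$; the series $\phi$ has summable coefficients since $\phi(1)=\mu<\infty$, and aperiodicity guarantees $\phi(s)\neq 0$ on the closed unit disc (for $|s|<1$ because $|F(s)|<1$, and for $|s|=1,\ s\neq1$ because aperiodicity forbids $F(s)=1$), so Wiener's theorem gives that $1/\phi$ also has absolutely summable coefficients. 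Consequently the coefficients of $U(s)-\frac{1/\mu}{1-s}$, which are exactly $u_n-\pi(z)$, are absolutely summable. (Alternatively, one may couple the renewal process with its stationary version, whose coupling time has finite mean under a finite second moment, yielding $\sum_n|u_n-\pi(z)|<\infty$ directly.) Verifying the nonvanishing of $\phi$ on $|s|=1$ from aperiodicity, and justifying the appeal to Wiener's theorem, is the delicate point; the remaining manipulations are routine bookkeeping.
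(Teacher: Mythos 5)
Your proof is correct, and while its core bookkeeping mirrors the paper's (renewal equation for $E_zf_c(X_n)$, convolution solution against the renewal sequence, splitting $u_{n-j}=\pi(z)+(u_{n-j}-\pi(z))$, then bounding the two pieces via \eqref{eq214} and \eqref{eq23} respectively), you diverge from the paper in two genuine ways. First, the paper handles general $x$ by relocating the hypotheses to the state $x$ itself, invoking Proposition \ref{prop1} together with the solidarity of $E_z\tau(z)^2<\infty$ (citing Chung); you instead transfer from $x$ to $z$ directly by a first-passage decomposition, $E_xf_c(X_n)=r_n(x)+\sum_{k\le n}P_x(\tau(z)=k)h_{n-k}$, which needs only \eqref{eq23} and avoids solidarity arguments altogether --- a cleaner and more self-contained reduction. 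Second, and more substantially, the paper obtains the crucial estimate $\sum_n|P_x(X_n=x)-\pi(x)|<\infty$ (its \eqref{eq220}) by citation to Meyn and Tweedie's general-state-space theory (ergodicity of degree 2), whereas you prove the corresponding fact $\sum_n|u_n-\pi(z)|<\infty$ from scratch via the generating-function identity $U(s)-\tfrac{1/\mu}{1-s}=\psi(s)/(\mu\phi(s))$ and the one-sided Wiener $1/f$ theorem; your identity checks out (indeed $\tfrac{\mu-\phi(s)}{1-s}=\psi(s)$ and $\psi(1)=\tfrac12(E_z\tau(z)^2-\mu)$), your verification that $\phi$ is nonvanishing on the closed disc correctly isolates where aperiodicity enters (you should also note $\phi(1)=\mu\neq 0$, though that is trivial), and the coupling alternative you mention also works. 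The trade-off: your route is fully self-contained and makes transparent exactly where aperiodicity and the second-moment condition are used, at the cost of invoking the Banach-algebra version of Wiener's theorem; the paper's route is shorter because it outsources the hard renewal-theoretic estimate to the literature.
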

 \begin{proof} We first note that $E_z\tau(z)^2<\infty$ is a solidarity property, in the sense that if it holds for one $z\in S$, then it holds for all $z\in S$; see p.84 of \citet{chungMarkovChains1967}. As a consequence of Proposition \ref{prop1}, we may assume \eqref{eq214} and $E_z\tau(z)^2<\infty$ hold for $z=x$.

 Then, if $a_n^*=E_xf_c(X_n)$ it follows that it satisfies the renewal equation
 \begin{align*}
 a_n^* = \sum_{j=1}^{n}P_x(\tau(x)=j)a^*_{n-j} + b_n^*,
 \end{align*}
 where $b_n^*=E_x f_c(X_n)I(\tau(x)>n)$. So,
 \begin{align*}
 a_n^* = \sum_{j=0}^{n}P_x(X_j=x) b_{n-j}^*.
 \end{align*}
 Recalling that $0=\pi f_c=\sum_{j=0}^{\infty}b_j^*$, we find that 
\begin{align*}
 a_n^*&=\pi(x)\sum_{j=0}^{n}b_{n-j}^*+\sum_{j=0}^{n}(P_x(X_j=x)-\pi(x))b_{n-j}^*\\
 &=-\pi(x)\sum_{j=n+1}^{\infty}b_j^* + \sum_{j=0}^{n}(P_x(X_j=x)-\pi(x))b_{n-j}^*.
\end{align*}
Hence,
\begin{align*}
\sum_{n=0}^{\infty}|a_n^*|&\leq \pi(x)\sum_{n=0}^{\infty}\sum_{j=n+1}^{\infty}|b_j^*| + \sum_{n=0}^{\infty}\sum_{j=0}^{n}|P_x(X_{n-j}=x)-\pi(x)||b_j^*|\\
&\leq \pi(x)\sum_{j=0}^{\infty}(j+1)|E_xf_c(X_j)I(\tau(x)>j)|+ \sum_{j=0}^{\infty}|b_j^*|\sum_{n=j}^{\infty}|P_z(X_{n-j}=x)-\pi(x)|\\
&\leq \pi(x)E_x\sum_{j=0}^{\infty}(j+1)|f_c(X_j)| I(\tau(x)>j) \\
&\qquad\qquad +E_x\sum_{j=0}^{\infty}(j+1)|f_c(X_j)|I(\tau(x)>j)\cdot \sum_{n=0}^{\infty}|P_x(X_n=x)-\pi(x)|\\
&= \pi(x)E_x \sum_{j=0}^{\tau(x)-1}(j+1)|f_c(X_j)| + E_x\sum_{j=0}^{\tau(x)-1}|f_c(X_j)|\cdot \sum_{n=0}^{\infty}|P_x(X_n=x)-\pi(x)|<\infty,
\end{align*}
provided that
\begin{align}
\sum_{n=0}^{\infty}|P_x(X_n=x)-\pi(x)|<\infty.\label{eq220}
\end{align}
The finiteness of \eqref{eq220} follows from Proposition 11.1.1 and Theorem 13.4.5 of \citet{meynMarkovChainsStochastic2012}. 
 \end{proof}

\begin{remark}
 Observe that when \eqref{eq219} holds for each $x\in S$, it is clear that
 \begin{align*}
 g^*(x)\overset{\Delta}{=} \sum_{j=0}^{\infty}E_xf_c(X_j)
 \end{align*}
 is well-defined, finite-valued, and satisfies \eqref{eq24}. So, when $\eqref{eq214}$ is valid and $E_z\tau(z)^2<\infty$ for some $z\in S$, the infinite sum representation $g^*=(g^*(x):x\in S)$ is an alternative means of representing the solution to Poisson's equation \eqref{eq24}.
\end{remark}

 We turn next to the development of Lyapunov criteria that characterize our weakened sufficient conditions under which solutions of Poisson's equation exist, and exhibit the summability (Theorem \ref{thm4}) and limiting behaviors (Theorem \ref{thm3}) described above. 

 Our results are a simple consequence of the following special case of the Comparison Theorem; see p.344 of \citet{meynMarkovChainsStochastic2012}.

 \begin{proposition} \label{prop2} Suppose that $X=(X_n:n\geq 0)$ is an irreducible and recurrent Markov chain. Assume that there exists $v:S\rightarrow \mathbb R_+$, $w:S\rightarrow \mathbb R_+$, and a finite subset $K\subseteq S$ for which $E_x v(X_1)<\infty$ for $x\in K$ and
 \begin{align}
 E_xv(X_1)\leq v(x)-w(x)\label{eq221}
 \end{align}
 for $x\in K^c$. Then, there exists $c<\infty$ such that
 \begin{align}
 E_x\sum_{j=0}^{\tau(z)-1}w(X_j) \leq v(x)+c\label{eq222}
 \end{align}
 for $x\in S$ and $z\in K$. 
 \end{proposition}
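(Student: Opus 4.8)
The plan is to reduce the one-sided drift inequality on $K^c$ to a single global Foster--Lyapunov inequality, build a stopped supermartingale to recover the comparison bound, and then control the residual term coming from $K$ by a trace-chain argument. The first two steps reproduce the standard comparison theorem; the genuine content, which is where I expect the main obstacle, lies in the final step.

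First I would convert the hypothesis into one inequality valid on all of $S$. Since $K$ is finite and $E_y v(X_1)<\infty$ for $y\in K$, the constant
\begin{align*}
b\overset{\Delta}{=}\max_{y\in K}\big(E_y v(X_1)-v(y)+w(y)\big)^+
\end{align*}
is finite, and combining it with \eqref{eq221} gives $Pv(x)\le v(x)-w(x)+bI(x\in K)$ for every $x\in S$. A short induction on the crude consequence $Pv\le v+b$ shows $E_x v(X_n)\le v(x)+nb<\infty$ for each $n$, which secures all the integrability needed below; in particular the stopped process is integrable because $w(X_j)\le v(X_j)+\max_{y\in K}w(y)$.

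Next I would fix $z\in K$, abbreviate $\tau=\tau(z)$, and consider
\begin{align*}
M_n=v(X_{n\wedge\tau})+\sum_{j=0}^{(n\wedge\tau)-1}\big(w(X_j)-bI(X_j\in K)\big).
\end{align*}
On $\{n<\tau\}$ the global drift inequality yields $E_x[M_{n+1}\mid\mathcal F_n]\le M_n$, while on $\{n\ge\tau\}$ the process is frozen, so $(M_n)$ is a $P_x$-supermartingale and $E_x M_n\le M_0=v(x)$. Discarding the nonnegative term $v(X_{n\wedge\tau})$ and letting $n\to\infty$ (recurrence forces $\tau<\infty$ a.s., and monotone convergence handles the nonnegative $w$-sum) gives
\begin{align*}
E_x\sum_{j=0}^{\tau-1}w(X_j)\le v(x)+b\,E_x\sum_{j=0}^{\tau-1}I(X_j\in K).
\end{align*}

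The crux, and the step I expect to be hardest, is showing that $N_K\overset{\Delta}{=}\sum_{j=0}^{\tau(z)-1}I(X_j\in K)$ has expectation bounded by a constant \emph{independent of the starting state} $x$. For this I would pass to the trace (watched) chain $\tilde X$ obtained by observing $X$ only at its successive visits to $K$. Irreducibility of $X$ makes $\tilde X$ an irreducible chain on the finite set $K$, hence positive recurrent with finite mean hitting and return times, and $N_K$ is exactly the number of steps $\tilde X$ takes to reach $z$ from its first state in $K$. Since that entry state always lies in $K$ no matter what $x$ is, we obtain $E_x N_K\le\max_{y\in K}\tilde E_y[\tilde\tau(z)]=:c_0<\infty$ uniformly in $x$, where $\tilde\tau(z)=\inf\{k\ge 1:\tilde X_k=z\}$. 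Setting $c=b\,c_0$ then yields \eqref{eq222}. The only points demanding real care are the uniformity of this last bound (which is precisely what the finiteness of $K$ buys us, since $c_0$ is a maximum over finitely many states) and the edge case $x=z$, where $N_K$ is a trace return time rather than a hitting time but is still dominated by the same $c_0$.
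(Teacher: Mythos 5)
Your proof is correct, and its skeleton is the same as the paper's: absorb the finitely many states of $K$ into a single global drift inequality $Pv \le v - w + bI(\cdot \in K)$, deduce the comparison bound $E_x\sum_{j=0}^{\tau(z)-1}w(X_j) \le v(x) + b\,E_x\sum_{j=0}^{\tau(z)-1}I(X_j\in K)$, and then bound the expected number of visits to $K$ before $\tau(z)$ uniformly in $x$ by passing to the trace chain on the finite set $K$, whose mean hitting and return times to $z$ are finite by irreducibility and finiteness of $K$. The one place you diverge is the middle step: the paper simply invokes the Comparison Theorem (p.~344 of Meyn and Tweedie) to obtain that inequality, whereas you prove it from scratch via the stopped process $M_n = v(X_{n\wedge\tau}) + \sum_{j=0}^{(n\wedge\tau)-1}\left(w(X_j)-bI(X_j\in K)\right)$, checking integrability from $Pv\le v+b$ and passing to the limit by monotone convergence after discarding the nonnegative term $v(X_{n\wedge\tau})$. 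This makes your argument self-contained at the cost of some bookkeeping, and your treatment of the delicate points (integrability of the stopped supermartingale, and the edge case $x=z$ where the trace-chain quantity is a return time rather than a hitting time, still covered by the maximum over $y\in K$) is accurate; in both proofs the essential content is the same trace-chain/finite-$K$ argument, so the two routes differ only in whether the comparison step is cited or rederived.
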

 \begin{proof}  We first note that if we set 
 \begin{align*}
 d= \max_{x\in K} [E_xv(X_1) -v(x)+w(x)],
 \end{align*}
 then \eqref{eq221} implies that 
 \begin{align*}
 E_xv(X_1) \leq v(x)-w(x)+dI(x\in K)
 \end{align*}
 for $x\in S$. We then apply the Comparison Theorem to conclude that
 \begin{align*}
 E_x\sum_{j=0}^{\tau(z)-1} w(X_j) \leq v(x) + dE_x \sum_{j=0}^{\tau(z)-1}I(X_j\in K)
 \end{align*}
 for $x\in S$. If $\Lambda_0=\inf\{n\geq 0:X_n\in K\}$, with $\Lambda_m=\inf\{n>\Lambda_{m-1}:X_n\in K\}$ for $n\geq 1$, the recurrence of $X$ implies that the $\Lambda_m$'s are finite-valued. Furthermore, $(X_{\Lambda_m}:m\geq 0)$ is the Markov chain on $K$ that records the successive visits of $X$ to $K$. Obviously, for $x\in S$, 
 \begin{align*}
 E_x \sum_{j=0}^{\tau(z)-1} I(X_j\in K) \leq E_x\gamma(z)
 \end{align*}
 where $\gamma(z)=\inf\{m\geq 1:X_{\Lambda_m}=z\}$. Since $X$ is irreducible, the finite-state Markov chain $(X_{\Lambda_m}:m\geq 0)$ is irreducible. The finiteness of $|K|$ implies that $E_y \gamma(z)<\infty$ for $y,z\in K$. If we set $c=d\max\{E_y\gamma(z):y,z\in K\}$, we obtain \eqref{eq222}. 
 \end{proof}

 \begin{theorem}\label{thm5} Suppose that $X$ is an irreducible Markov chain, and let $K\subseteq S$ be a finite subset. Consider the following conditions:

 \begin{enumerate}[label=\alph*)]
 \item There exists $v_1:S\rightarrow \mathbb R_+$ such that $(Pv_1)(x)<\infty$ for $x\in K$ and
 \begin{align*}
 (Pv_1)(x)\leq v_1(x)-1
 \end{align*}
 for $x\in K^c$;
 \item There exists $v_2:S\rightarrow \mathbb{R}_+$ such that $(Pv_2)(x)<\infty$ for $x\in K$ and
 \begin{align*}
 (Pv_2)(x)\leq v_2(x)-|f(x)|
 \end{align*}
 for $x\in K^c$;
 \item There exist $v_3:S\rightarrow \mathbb R_+$ and $v_4:S\rightarrow \mathbb R_+$ such that $(Pv_3)(x)+(Pv_4)(x)<\infty$ for $x\in K$,
 \begin{align*}
 (Pv_3)(x)\leq v_3(x)-v_1(x),
 \end{align*}
 and
 \begin{align*}
 (Pv_4)(x)\leq v_4(x)-v_2(x).
 \end{align*}
 \end{enumerate}
 Then:
 \begin{enumerate}[label=\roman*)]
  \item If $a)$ holds, $X$ is positive recurrent and a unique stationary distribution $\pi$ exists.
  \item If $a)$ and $b)$ hold, then Theorems \ref{thm1} and \ref{thm2} are valid.
  \item If $a)$, $b),$ and $c)$ hold, then Theorems \ref{thm1} through \ref{thm4} are valid.
 \end{enumerate} 
 \end{theorem}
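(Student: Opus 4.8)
The plan is to deduce all three conclusions from a single instrument, Proposition~\ref{prop2}, applied to the three drift conditions with carefully chosen comparison functions $w$, and then to translate the resulting potential bounds into the hypotheses required by Theorems~\ref{thm1}--\ref{thm4} through the regenerative ratio identity $\pi h = E_z\sum_{j=0}^{\tau(z)-1}h(X_j)\big/E_z\tau(z)$, valid for nonnegative $h$. The only genuinely new work lies in iii); parts i) and ii) are essentially direct applications of Proposition~\ref{prop2}.

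For i), condition a) is Foster's criterion. To see this within the paper's toolkit, note that on $K^c$ the drift $(Pv_1)(x)\le v_1(x)-1$ makes $v_1(X_{n\wedge\sigma})+(n\wedge\sigma)$ a nonnegative supermartingale, where $\sigma=\inf\{n\ge 0:X_n\in K\}$; the supermartingale inequality together with monotone convergence gives $E_x\sigma\le v_1(x)<\infty$, so the finite set $K$ is reached almost surely from every state, which for an irreducible chain forces recurrence. With recurrence in hand, Proposition~\ref{prop2} with $v=v_1$ and $w\equiv 1$ yields $E_x\tau(z)\le v_1(x)+c<\infty$ for $z\in K$; taking $x=z$ shows $E_z\tau(z)<\infty$, i.e.\ positive recurrence, and uniqueness of $\pi$ is the standard consequence of irreducibility plus positive recurrence. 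For ii), recurrence is now available, so Proposition~\ref{prop2} with $v=v_2$ and $w=|f|$ gives $E_z\sum_{j=0}^{\tau(z)-1}|f(X_j)|\le v_2(z)+c<\infty$; dividing by $E_z\tau(z)$ via the ratio identity yields $\pi|f|<\infty$, i.e.\ $f\in L^1(\pi)$, which is exactly the hypothesis of Theorems~\ref{thm1} and \ref{thm2}.

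The substance is in iii), where the nested conditions in c) are engineered to control the two second-order quantities demanded by Theorems~\ref{thm3} and \ref{thm4}. First I would apply Proposition~\ref{prop2} to the two inequalities in c), with $(v,w)=(v_3,v_1)$ and $(v,w)=(v_4,v_2)$, obtaining $E_z\sum_{j=0}^{\tau(z)-1}v_1(X_j)\le v_3(z)+c_3$ and $E_z\sum_{j=0}^{\tau(z)-1}v_2(X_j)\le v_4(z)+c_4$; the ratio identity then gives $\pi v_1<\infty$ and $\pi v_2<\infty$. Next I would reuse the double-sum interchange from the proof of Proposition~\ref{prop1}: for nonnegative $h$, $E_z\sum_{j=0}^{\tau(z)-1}(j+1)h(X_j)=E_z\sum_{k=0}^{\tau(z)-1}\sum_{j=k}^{\tau(z)-1}h(X_j)=E_z\tau(z)\cdot\pi H_h$, where $H_h(x)=E_x\sum_{j=0}^{\tau(z)-1}h(X_j)$. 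Taking $h\equiv 1$ gives $H_1(x)=E_x\tau(z)\le v_1(x)+c$ and $E_z\sum_{j=0}^{\tau(z)-1}(j+1)=\tfrac12(E_z\tau(z)^2+E_z\tau(z))$, so $E_z\tau(z)^2=2E_z\tau(z)\,\pi H_1-E_z\tau(z)<\infty$ because $\pi H_1\le \pi v_1+c<\infty$. Taking $h=|f_c|$ gives $H_{|f_c|}(x)\le (v_2(x)+c_2)+|\pi f|\,(v_1(x)+c_1)$, whence $\pi H_{|f_c|}<\infty$ and therefore $E_z\sum_{j=0}^{\tau(z)-1}(j+1)|f_c(X_j)|<\infty$, which yields \eqref{eq214}. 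Since \eqref{eq214} and $E_z\tau(z)^2<\infty$ then hold for all states (the former by the solidarity Proposition~\ref{prop1}, the latter by p.84 of \citet{chungMarkovChains1967}), the hypotheses of Theorems~\ref{thm3} and \ref{thm4} are met, so those theorems are valid.

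The main obstacle is iii): unlike $f\in L^1(\pi)$, the conditions \eqref{eq214} and $E_z\tau(z)^2<\infty$ are second-order, and controlling them requires a second application of the Comparison Theorem layered on top of the first. The key realization is that the double-sum/regenerative identity behind \eqref{eq217} and Proposition~\ref{prop1} converts each $(j+1)$-weighted return-time expectation into the $\pi$-average of a first-order potential ($H_1=E_\cdot\tau(z)$ or $H_{|f_c|}$), and that the auxiliary pairs $(v_1,v_3)$ and $(v_2,v_4)$ are precisely what render those $\pi$-averages finite. Once this bridge is identified, the remaining steps are routine bookkeeping with nonnegative quantities, where Tonelli justifies every interchange and the finiteness of the right-hand sides propagates to the left.
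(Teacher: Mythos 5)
Your proof is correct and takes essentially the same route as the paper: Proposition~\ref{prop2} is applied to each drift pair $(v_1,1)$, $(v_2,|f|)$, $(v_3,v_1)$, $(v_4,v_2)$, and the Markov property together with the double-sum interchange converts the resulting cycle bounds into $E_z\tau(z)^2<\infty$ and \eqref{eq214}, so that Theorems \ref{thm3} and \ref{thm4} apply. You are in fact slightly more careful than the paper in two harmless respects: you first establish recurrence (via the Foster supermartingale argument) before invoking Proposition~\ref{prop2}, whose hypotheses require it, and you route the second-order bounds through $\pi$-averages via the ratio identity rather than bounding the cycle sums directly --- the same computation after dividing by $E_z\tau(z)$.
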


 \begin{proof}  Condition $a)$ and Proposition 2 imply that $E_x\tau(z)\leq v_1(x)+c$ for $x\in S$, whereas condition $b)$ implies that for $x\in S$,
 \begin{align}
 E_x\sum_{j=0}^{\tau(z)-1}|f(X_j)| \leq v_2(x)+c\label{eq213-not-used}
 \end{align}
 for some $c\in K$, proving $i)$ and $ii)$ when we set $x=z$. (Of course, these results are well known; see \citet{meynMarkovChainsStochastic2012}.) If $c)$ is also in force, then Proposition 2 implies that 
 \begin{align*}
 E_z\sum_{j=0}^{\tau(z)-1}v_1(X_j) \leq v_3(z)+c
 \end{align*}
 and
 \begin{align*}
 E_z\sum_{j=0}^{\tau(z)-1}v_2(X_j) \leq v_4(z)+c.
 \end{align*}
 Consequently, 
 \begin{align*}
 \infty> E_z \sum_{j=0}^{\tau(z)-1} (\tau(z)-j) = E_z \frac{\tau(z)(\tau(z)+1)}{2}
 \end{align*}
 and
 \begin{align*}
 \infty>E_z \sum_{j=0}^{\tau(z)-1}\sum_{k=j}^{\tau(z)-1}|f(X_k)| = E_z\sum_{k=0}^{\tau(z)-1}(k+1)|f(X_k)|,
 \end{align*}
 for $z\in K$, proving part $iii)$ for $z\in K$. These conditions imply Theorems \ref{thm3} and \ref{thm4}.  
 \end{proof}

 \begin{remark} As usual, these Lyapunov conditions are also necessary conditions, in order that the required moments of Theorems 1 through 4 be finite (for $f$ replacing $f_c$). For example, in order that
 \begin{align}
 E_z \sum_{j=0}^{\tau(z)-1}(j+1)|f(X_j)|<\infty,\label{eq224-new}
 \end{align}
it must be that
\begin{align*}
q(x)\overset{\Delta}{=}E_x\sum_{j=0}^{T_K-1}(j+1)|f(X_j)|<\infty
\end{align*}
for all $x\in S$, where $T_K=\inf\{n\geq 0: X_n\in K\}$ with $K\supseteq\{z\}$. But the function $q=(q(x):x\in S)$ is then a finite-valued solution of
\begin{align*}
q(z)=(Pq)(x)+\tilde q(x)
\end{align*}
for $x\in S$, where $\tilde q=(\tilde q(x):x\in S)$ solves
\begin{align*}
\tilde q(x)=(P\tilde q)(x) + |f(x)|I(x\in K^c)
\end{align*}
for $x\in S$. This, of course, implies that $q$ and $\tilde q$ satisfy the Lyapunov inequalities for $v_2$ and $v_4$ with equality, so that if \eqref{eq224-new} holds, there necessarily exist solutions of the associated Lyapunov inequalities. Similarly, the Lyapunov inequalities for $v_1$ and $v_3$ are necessary for the finiteness of $E_z\tau^2(z)$.
\end{remark}

 Theorem 5 therefore provides (weak) sufficient conditions under which Theorems \ref{thm1} through \ref{thm4} are valid. Assuming that one can find such suitable Lyapunov conditions, one can then validate Theorems \ref{thm1} through \ref{thm4} under close to minimal conditions.

\section{Two Illustrative Examples}\label{sec3}

Our first example illustrates the fact that when $S$ is infinite, there can be infinitely many solutions of Poisson's equation; see, for example, \citet{bhulaiUniquenessSolutionsPoisson2003}. However, the example below shows that when $X$ is irreducible, there can even exist infinitely many linearly independent solutions of Poisson's equation. So, uniqueness can fail badly.

Given the form of (1.1), non-uniqueness emerges from consideration of solutions $h$ to the linear system
\begin{align}
Ph=h.\label{eq31}
\end{align}
A function $h$ satisfying \eqref{eq31} is called a \emph{harmonic function}. Of course, the constant function $h$ always solves $\eqref{eq31}$, and this is the source of the fact that even well-behaved solutions to Poisson's equation are only unique up to an additive constant.

\begin{example}\label{example1} Consider the Markov chain $X$ defined on the state space $S=\{0\}\cup\{(i,j):i\geq 1, j\geq 1\}$ with transition probabilities defined by
\begin{align*}
P(x,y) = \begin{cases} p, & x =(i,j), y=(i+1,j), i,j\geq 1\\
q,& x = (i,j), y=(i-1,j), i\geq 2,j\geq 1\\
q,& x=(1,j), y=0, j\geq 1\\
r_j, & x=0,y=(1,j), j\geq 1\\
1-\sum_{j=1}^\infty r_j, & x=y=0\\
0, & \text{else},
\end{cases}
\end{align*}
where $0<p<1/2,q=1-p,r_j>0$ for $j\geq 1$, and $\sum_{j=1}^\infty r_j<1$. This Markov chain is irreducible, aperiodic, and positive recurrent (since $p<q$). To compute its stationary distribution, we note that $\pi=(\pi(x):x\in S)$ satisfies the linear system of equations
\begin{align}
\pi(i,j) = p \pi(i-1, j)+q\pi(i+1,j)\label{eq32}
\end{align}
for $i\geq 2,j\geq 1$ while
\begin{align}
\pi(1,j) = r_j\pi(0) + q\pi(2,j)\label{eq33}
\end{align}
and
\begin{align}
\pi(0) = (1-\sum_{j=1}^{\infty}r_j)\pi(0) + q\sum_{j=1}^{\infty}\pi(1,j).\label{eq34}
\end{align}
The general solution of \eqref{eq32} is 
\begin{align}
\pi(i,j)=a_j+b_j\left(\frac{p}{q}\right)^i\label{eq35}
\end{align}
for $i,j\geq 1$. Relation \eqref{eq33} implies that
\begin{align}
a_j+b_j = \frac{r_j}{p}\pi(0),
\end{align}
while \eqref{eq34} requires that the $a_j$'s be summable and satisfy
\begin{align}
  \sum_{j=1}^\infty a_j=0.\label{eq37}
\end{align}
Equations \eqref{eq35} and \eqref{eq37} imply that the solution space corresponding to the stationary equations is infinite-dimensional. Of course, \eqref{eq35} implies that $\pi(i,j)\rightarrow a_j$ as $i\rightarrow\infty$. In view of \eqref{eq37}, any solution for which $a_j\neq 0$ for some $j\geq 1$ must therefore have both the property that the $\pi(x)$'s are non-summable and that the $\pi(x)$'s are of mixed-sign. Hence, if $\pi$ is to be a probability distribution, we must have $a_j=0$ for $j\geq 1$, in which case
\begin{align*}
\pi(i,j) = \frac{r_j}{p}\left(\frac{p}{q}\right)^i\pi(0)
\end{align*}
for $i,j\geq 1$, with
\begin{align*}
\pi(0) = (1+\frac{\sum_{j=1}^{\infty}r_j}{q-p})^{-1}.
\end{align*}
The solution space corresponding to the space of harmonic functions is similarly infinite-dimensional. In particular, \eqref{eq31} is given by the linear system
\begin{align*}
ph(i+1,j) + qh(i-1,j) = h(i,j)
\end{align*}
for $i\geq 2,j\geq 1$, with
\begin{align*}
ph(2,j) + qh(0) = h(1,j)
\end{align*}
for $j\geq 1$, and 
\begin{align*}
(1-\sum_{j=1}^{\infty}r_j)h(0) + \sum_{j=1}^{\infty}r_jh(1,j) = h(0).
\end{align*}
The general solution of this linear system is given by
\begin{align}
h(i,j) = h(0) + \tilde b_j[(\frac{q}{p})^i-1]\label{eq38}
\end{align}
for $i,j\geq 1$, where the sequence $(\tilde b_j:j\geq 1)$ must be both summable and satisfy
\begin{align}
\sum_{j=1}^{\infty} \tilde b_j=0.\label{eq39}
\end{align}
\end{example}

An interesting feature of Example 1 is that conditional on $X_0=x\in S$, the birth-death structure of $X$ implies that the random variable (rv) $X_n=(I_n,J_n)$ must be such that $I_n\leq i+n$. As a result, $(h(X_n):n\geq 0)$ is $P_x$-integrable and hence is a $P_x$-martingale for each $x\in S$. Consequently, whenever $f\in L^1(\pi)$, 
\begin{align*}
g_z(X_n) + \sum_{i=0}^{n-1}f_c(X_j) + h(X_n)
\end{align*}
is a $P_x$-martingale for all the harmonic functions characterised by \eqref{eq38} and \eqref{eq39}, and $g_z+h$ is a solution of Poisson's equation.

Note that any non-constant harmonic function fails to be $\pi$-integrable, so $g_z+h$ is not $\pi$-integrable. Hence, this example establishes that even when a solution to Poisson's equation induces a martingale, that solution may be badly behaved (e.g. it may not be $\pi$-integrable).

This raises the question of whether there is an alternative martingale property that characterizes well-behaved solutions to Poisson's equation (e.g. solutions of the form $g_z+ce$ for $c\in \mathbb R$, where $e(x)=1$ for $x\in S$). Our next result provides one such characterization; see part b.).

\begin{theorem}\label{thm6} Assume that $f\in L^1(\pi)$. 
\begin{enumerate}[label=\alph*)]
\item For $x,y,z\in S$,
\begin{align}
(g_y(X_{\tau(z)\land n})+\smashoperator{\sum_{j=0}^{(\tau(z)\land n)-1}}f_c(X_j):n\geq 0)\label{eq310}
\end{align}
is a $P_x$-uniformly integrable martingale adapted to $(\mathcal{F}_n:n\geq 0)$.
\item Suppose that there exists $z\in S$ and $g:S\rightarrow \mathbb{R}$ such that for each $x\in S$,
\begin{align*}
(g(X_{\tau(z)\land n}) + \smashoperator{\sum_{j=0}^{(\tau(z)\land n)-1}}f_c(X_j):n\geq 0)
\end{align*}
is a $P_x$-uniformly integrable martingale adapted to $(\mathcal{F}_n:n\geq 0)$. Then, for each $y\in S$, $g(x)=g_y(x)+g(y)e(x)$.
\end{enumerate}
\end{theorem}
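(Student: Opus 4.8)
The plan is to treat the two parts separately, with part a) producing a representation that makes part b) essentially an exercise in optional sampling. For part a) I would first reduce to the case $y=z$: by Theorem~\ref{thm1}b), $g_y = g_z - g_z(y)e$, so replacing $g_y$ by $g_z$ only shifts the process in \eqref{eq310} by the additive constant $-g_z(y)$, which affects neither the martingale property nor uniform integrability. Writing $\tau=\tau(z)$ and $W=\sum_{j=0}^{\tau-1}f_c(X_j)$ (which is $P_x$-integrable by \eqref{eq23}), the key step is to establish that for every $n\geq 0$,
\[
g_z(X_{\tau\land n}) + \sum_{j=0}^{(\tau\land n)-1} f_c(X_j) = E_x[W\mid\mathcal{F}_n].
\]
On $\{\tau\leq n\}$ the left side equals $g_z(z)+\sum_{j=0}^{\tau-1}f_c(X_j)=W$, which is $\mathcal{F}_n$-measurable, so the identity is immediate there. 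On $\{\tau>n\}$ the chain has not yet hit $z$, so $\tau = n + \tau\circ\theta_n$, and the Markov property at time $n$ gives $E_x[\sum_{j=n}^{\tau-1}f_c(X_j)\mid\mathcal{F}_n]=E_{X_n}\sum_{k=0}^{\tau-1}f_c(X_k)=g_z(X_n)$; adding the $\mathcal{F}_n$-measurable partial sum $\sum_{j=0}^{n-1}f_c(X_j)$ then yields the claimed identity. Once this representation is in hand, uniform integrability is automatic, since the conditional expectations of a single $P_x$-integrable random variable always form a $P_x$-uniformly integrable family, and the martingale property follows as well (alternatively, from Theorem~\ref{thm2} and the fact that a stopped martingale is a martingale).

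For part b) I would exploit that a uniformly integrable martingale is closed by its terminal value. Since $X$ is recurrent, $\tau(z)<\infty$ $P_x$-a.s., so the given martingale converges $P_x$-a.s. and in $L^1$ to $L_\infty = g(z)+\sum_{j=0}^{\tau(z)-1}f_c(X_j)$. Evaluating the martingale at $n=0$ gives $L_0=g(x)$ under $P_x$ (because $\tau(z)\geq 1$, the stopped value is $X_0$ and the sum is empty), and the $L^1$ convergence forces
\[
g(x) = L_0 = E_x[L_\infty] = g(z) + E_x\sum_{j=0}^{\tau(z)-1}f_c(X_j) = g(z) + g_z(x),
\]
so $g = g_z + g(z)e$. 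I would then convert this to the stated form: using $g_z = g_y + g_z(y)e$ from Theorem~\ref{thm1}b) together with $g_z(y) = g(y)-g(z)$ (obtained by evaluating $g = g_z+g(z)e$ at $y$), substitution gives $g(x)=g_y(x)+g(y)$ for all $x,y\in S$.

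The hard part is the uniform-integrability bookkeeping, and it is where the two parts differ in character. In part a) the real work is the Markov-property identity re-expressing the stopped process as $E_x[W\mid\mathcal{F}_n]$; the subtlety is correctly handling the two events $\{\tau\leq n\}$ and $\{\tau>n\}$ and the shift decomposition $\tau = n+\tau\circ\theta_n$ valid on $\{\tau>n\}$. In part b) the crux is that the uniform-integrability hypothesis is precisely what licenses passing to the limit in the optional-sampling identity $L_0 = E_x[L_\infty]$: without it, the harmonic-function discussion following Example~\ref{example1} shows one could add any bounded-increment harmonic term and still have a martingale, so uniform integrability is doing the essential work of collapsing the solution set down to the one-dimensional family $g_z + ce$.
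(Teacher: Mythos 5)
Your proposal is correct, and part a) takes a genuinely different route from the paper, while part b) coincides with it. For part a), after the same reduction to $y=z$, the paper obtains the martingale property of \eqref{eq310} by citing Theorem~\ref{thm2} and optional sampling, and then proves uniform integrability \emph{indirectly}: it notes the a.s.\ convergence of the stopped process $M_n$ to $W=\sum_{j=0}^{\tau(z)-1}f_c(X_j)$ and verifies $E_x|M_n|\rightarrow E_x|W|$ via the bound \eqref{eq312} together with a renewal/regenerative estimate showing $E_x\left[|g_z(X_n)|I(\tau(z)>n)\right]\rightarrow 0$. You instead identify $M_n$ directly as the Doob martingale $E_x[W\mid\mathcal{F}_n]$ of the single $P_x$-integrable variable $W$, which yields the martingale property and uniform integrability simultaneously and avoids the renewal-theoretic computation entirely; in effect you prove directly the closure property ($M_n=E_x[W\mid\mathcal{F}_n]$) that the paper's convergence argument establishes only implicitly. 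The price of your route is some bookkeeping you gloss over: $W$ is signed and $W\circ\theta_n$ is controlled only on $\{\tau(z)>n\}$, so the Markov-property step should be run on $W^{+}$ and $W^{-}$ separately (each $E_yW^{\pm}\leq E_y\sum_{j=0}^{\tau(z)-1}|f_c(X_j)|<\infty$ for every $y$ by \eqref{eq23}, so the subtraction is legitimate), and your evaluation on $\{\tau(z)\leq n\}$ implicitly uses $g_z(z)=0$, which comes from the regenerative ratio formula in the proof of Theorem~\ref{thm1}, not from its statement. These are minor, standard repairs, not gaps in the idea. Part b) is essentially the paper's argument verbatim: uniform integrability licenses passing to the limit to get $g(x)=g(z)+g_z(x)$, and the statement for general $y$ follows from Theorem~\ref{thm1}b) exactly as you describe.
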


\begin{proof} For part a), we first recognize that $g_y(\cdot)$ and $g_z(\cdot)$ differ by an additive constant (by Theorem \ref{thm1}), so it is sufficient to prove the result for $y=z$. We further note that Theorem \ref{thm2} establishes \eqref{eq28} is a $P_x$-martingale, and hence optional sampling implies that \eqref{eq310} is a $P_x$-martingale. So, it remains only to prove that \eqref{eq310} is $P_x$-uniformly integrable. Note that
\begin{align*}
g_z(X_{\tau(z)\land n})+\sum_{j=0}^{(\tau(z)\land n)-1}f_c(X_j)
\overset{a.s.}{\rightarrow} g_z(X(\tau(z))) + \sum_{j=0}^{\tau(z)-1}f_c(X_j) = \sum_{j=0}^{\tau(z)-1}f_c(X_j)
\end{align*}
as $n\rightarrow\infty$. In view of Theorem 4.6.3 of \citet{durrettProbabilityTheoryExamples2019}, it suffices to prove that
\begin{align}
E_x|g_z(X_{\tau(z)\land n}) +\smashoperator{\sum_{j=0}^{(\tau(z)\land n)-1}}|f_c(X_j)| \rightarrow E_x|\sum_{j=0}^{\tau(z)-1}f_c(X_j)|\label{eq311}
\end{align}
as $n\rightarrow\infty$. But 
\begin{align}
|g_z(X_{\tau(z)\land n}) + \smashoperator{\sum_{j=0}^{(\tau(z)\land n)-1}}f_c(X_j)| &\leq  |g_z(X_n)|I(\tau(z)>n) + \sum_{j=0}^{\tau(z)-1}|f(X_j)| + |\pi f|\tau(z)\label{eq312}
\end{align}
and the latter two rv's on the right-hand size of \eqref{eq312} have finite $P_x$-expectation because $f\in L^1(\pi)$ and $X$ is positive recurrent. For the first term, note that
\begin{align*}
E_x(|g_z(X_n)| I(\tau(z)>n))&= E_x |\sum_{j=n}^{\beta_n(z)-1}f_c(X_j)| I(\tau(z)>n)\\
&\leq E_x\sum_{j=n}^{\beta_n(z)-1}|f_c(X_j)| I(\tau(z)>n)\\
&=E_z \sum_{j=n}^{\tau(z)-1}|f_c(X_j)|I(\tau(z)>n)\rightarrow 0
\end{align*}
as $n\rightarrow\infty$. As a consequence, \eqref{eq311} follows and we have established the required uniform integrability. 

For part $b)$, note that the uniform integrability and martingale property imply that
\begin{align*}
g(x) = E_xg(X_{\tau(z)\land n}) + \smashoperator{\sum_{j=0}^{(\tau(x)\land n)-1}} f_c(X_j)
\rightarrow g(z) + E_x\sum_{j=0}^{\tau(z)-1}f_c(X_j) = g(z) + g_z(x)
\end{align*}
as $n\rightarrow\infty$, proving the result for $y=z$. The case of general $y$ is handled by just noting that $g_z(\cdot)$ and $g_y(\cdot)$ differ by an additive constant; see Theorem \ref{thm1}.  \\ 
\end{proof} 

While our first example illustrates non-uniqueness issues related to Poisson's equation, our second example provides further insight into existence and representation issues related to Poisson's equation. Theorem \ref{thm1} shows that whenever $f\in L^1(\pi)$, $g_z$ solves Poisson's equation. However, a commonly used representation of the solution of Poisson's equation is that given by \eqref{eq221}, namely
\begin{align}
g(x) = \sum_{j=0}^{\infty}E_xf_c(X_j).\label{eq313}
\end{align}
Our example below shows that there are Markov chains $X$ and functions $f\in L^1(\pi)$ for which the solution $g_z$ is well-defined, while the representation \eqref{eq313} is not well-defined (since the sum fails to be summable). Hence, $g_z$ is a universal representation of the solution to Poisson's equation, while the ``potential-theoretic'' representation \eqref{eq313} requires more regularity in order that it be valid.

\begin{example} \label{example2} Let $\beta_1,\beta_2,...$ be a sequence of independent and identically distributed (iid) positive integer-valued rv's, and put
\begin{align*}
S_n=\beta_1+\beta_2+...+\beta_n,
\end{align*}
with $S_0=0$. Let $\ell(n)=\max\{j:S_j\leq n\}$ and let
\begin{align*}
X_n = n-S_{\ell(n)}
\end{align*}
be the ``current age'' Markov chain associated with the inter-renewal times $\beta_1,\beta_2,...$ . Put $f(x)=\delta_{x0}$, so that $f(\cdot)$ is 1 when $x=0$ and 0 otherwise. Then,
\begin{align*}
E_0f(X_n) = P_0(X_n=0) \overset{\Delta}{=}u_n,
\end{align*}
where $(u_n:n\geq 0)$ is the renewal sequence associated with the increment probability mass function $(p_j:j\geq 1)$ given by $p_j=P(\beta_1=j)$ for $j\geq 1$. Assume that $(p_j:j\geq 1)$ is a positive sequence for which there exists $c>0$ and $\alpha>1$ for which
\begin{align*}
P(\beta_1>n)\sim cn^{-\alpha}
\end{align*}
as $n\rightarrow\infty$ (where we write $a_n\sim b_n$ as $n\rightarrow\infty$ when $a_n/b_n\rightarrow 1$ as $n\rightarrow\infty$). Then, $E\beta_1<\infty$ and $X$ is positive recurrent with $\pi(0)=\lambda\overset{\Delta}{=}1/E\beta_1$.

According to Lemma 4 of \citet{frenkBehaviorRenewalSequence1982},
\begin{align*}
u_n-\lambda \sim \frac{\lambda^2n P(\beta_1>n)}{\alpha-1}
\end{align*}
as $n\rightarrow\infty$, from which it follows that
\begin{align*}
P_0(X_n=0) - \pi(0) \sim\frac{\lambda^2 c n^{1-\alpha}}{(\alpha-1)}
\end{align*}
as $n\rightarrow\infty$. As a consequence, if $\alpha\in (1,2]$, \eqref{eq313} fails to be summable at $x=0$, providing the required example. 
\end{example}

\section{The CLT and LIL for Markov Chains}\label{sec4}

We finish this paper by developing a weakened Lyapunov criterion for the validity of the central limit theorem and law of the iterated logarithm for countable state Markov chains. It is well known that if there exists $z\in S$ such that
\begin{align}
E_z\left(\sum_{j=0}^{\tau(z)-1}f_c(X_j)\right)^2<\infty,\label{eq41}
\end{align}
then, regardless of the initial distribution for $X_0$,
\begin{align}
\frac{S_n(f)-n\pi f}{\sqrt{n}}\Rightarrow \sigma N(0,1)\label{eq42}
\end{align}
as $n\rightarrow\infty$, where $\Rightarrow$ denotes weak convergence, $N(0,1)$ is a standard normal rv with mean 0 and variance 1, and
\begin{align}
\sigma^2 \overset{\Delta}{=} \frac{E_z\left(\sum_{j=0}^{\tau(z)-1}f_c(X_j)\right)^2}{E_z\tau(z)},\label{eq43}
\end{align}
see p.99 of \citet{chungMarkovChains1967} or \citet{glynnLimitTheoremsCumulative1993}. Furthermore, Theorem 14.4 of \citet{chungMarkovChains1967} proves that if $\eqref{eq41}$ is valid for one $z\in S$, then it is valid for all $z\in S$, and the right-hand side of \eqref{eq43} does not depend on $z$. In \citet{glynnNecessaryConditionsLimit2002}, it is shown that \eqref{eq42} implies \eqref{eq41}, so that \eqref{eq41} is a necessary and sufficient condition for the CLT \eqref{eq42}, in the presence of positive recurrence and $f\in L^1(\pi)$.

We now slightly strengthen \eqref{eq41} to the condition
\begin{align}
E_z(\sum_{j=0}^{\tau(z)-1}|f_c(X_j)|)^2<\infty.\label{eq44}
\end{align}

\begin{theorem}\label{thm7} Suppose that $X$ is irreducible, and let $K\subseteq S$ be a finite subset. Suppose that there exists $v_1\in S\rightarrow \mathbb R_+$ and $v_2\in S\rightarrow\mathbb R_+$ such that $(Pv_i)(x)<\infty$ for $x\in K$ and $i=1,2$ and
\begin{align}
(Pv_1)(x)&\leq v_1(x)-(|f(x)|+1)\label{eq45}\\
(Pv_2)(x)&\leq v_2(x)-(|f(x)|+1)v_1(x)\label{eq46}
\end{align}
for $x\in K^c$. Then, $X$ is positive recurrent, $f\in L^1(\pi)$, and $\eqref{eq44}$ is valid, so that the CLT \eqref{eq42} holds. Furthermore, $\sigma^2$ can then be expressed as \eqref{eq43} or, equivalently, as 
\begin{align}
\sigma^2 = 2E_\pi g_z(X_0)f_c(X_0)- E_\pi f_c(X_0)^2.\label{eq47}
\end{align}
\end{theorem}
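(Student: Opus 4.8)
The plan is to route everything through Proposition \ref{prop2} with the weight $w=|f|+1$, to reduce the CLT to verifying the absolute second-moment bound \eqref{eq44}, and then to extract \eqref{eq47} from a regenerative expansion of $\sigma^2$ rather than from an $L^2(\pi)$-martingale argument. I would begin by disposing of positive recurrence and integrability. Since $|f(x)|+1\ge 1$, condition \eqref{eq45} gives $(Pv_1)(x)\le v_1(x)-1$ on $K^c$, which is the standard Foster--Lyapunov criterion for positive recurrence (as in part i) of Theorem \ref{thm5}); in particular $X$ is recurrent, as Proposition \ref{prop2} requires. Applying Proposition \ref{prop2} to \eqref{eq45} with $w=|f|+1$ then yields $H(x):=E_x\sum_{j=0}^{\tau(z)-1}(|f(X_j)|+1)\le v_1(x)+c$ for all $x\in S$ and $z\in K$; taking $x=z$ and dividing by $E_z\tau(z)$ via the regenerative formula gives $\pi|f|<\infty$, so $f\in L^1(\pi)$ and $\pi f$ is well-defined.

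The crux is \eqref{eq44}. Writing $h=|f|+1$ and $M=\sum_{j=0}^{\tau(z)-1}h(X_j)$, I would first observe $|f_c|\le(1+|\pi f|)h$, so $\sum_{j=0}^{\tau(z)-1}|f_c(X_j)|\le(1+|\pi f|)M$ and it suffices to show $E_zM^2<\infty$ for some $z\in K$. Using the elementary bound $M^2\le 2\sum_{j=0}^{\tau(z)-1}h(X_j)\sum_{k=j}^{\tau(z)-1}h(X_k)$ together with the strong Markov property at time $j$ (on $\{\tau(z)>j\}\in\mathcal{F}_j$ the residual segment is a hitting-time excursion from $X_j$, so $E_z[\sum_{k=j}^{\tau(z)-1}h(X_k)\mid\mathcal{F}_j]=H(X_j)\le v_1(X_j)+c$), I would obtain
\[
E_zM^2\le 2E_z\sum_{j=0}^{\tau(z)-1}h(X_j)v_1(X_j)+2c\,E_z\sum_{j=0}^{\tau(z)-1}h(X_j).
\]
The second term is finite by the previous step. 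The first term is controlled by applying Proposition \ref{prop2} to the second drift inequality \eqref{eq46}, whose weight is exactly $(|f|+1)v_1=hv_1$, giving $E_z\sum_{j=0}^{\tau(z)-1}h(X_j)v_1(X_j)\le v_2(z)+c<\infty$ for $z\in K$. Hence $E_zM^2<\infty$, which is \eqref{eq44}; since $(\sum f_c)^2\le(\sum|f_c|)^2$, this also yields \eqref{eq41}, and the cited CLT then delivers \eqref{eq42} with $\sigma^2$ as in \eqref{eq43}.

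For \eqref{eq47} I would avoid establishing $g_z\in L^2(\pi)$ and instead expand the numerator of \eqref{eq43} regeneratively. Splitting $E_z(\sum_{j=0}^{\tau(z)-1}f_c(X_j))^2$ into diagonal and off-diagonal parts, the diagonal contributes $E_z\tau(z)\,E_\pi f_c(X_0)^2$; here $f_c\in L^2(\pi)$ because $\sum f_c^2\le(\sum|f_c|)^2$ with \eqref{eq44}. For the off-diagonal part I would write $\sum_{j<k}f_c(X_j)f_c(X_k)=\sum_j f_c(X_j)\sum_{k=j+1}^{\tau(z)-1}f_c(X_k)$ and condition on $\mathcal{F}_j$; the strong Markov property identifies $E_z[\sum_{k=j}^{\tau(z)-1}f_c(X_k)\mid\mathcal{F}_j]=g_z(X_j)$ on $\{\tau(z)>j\}$, so this term becomes $E_z\sum_{j=0}^{\tau(z)-1}f_c(X_j)(g_z(X_j)-f_c(X_j))=E_z\tau(z)\big(E_\pi[g_z(X_0)f_c(X_0)]-E_\pi f_c(X_0)^2\big)$ after the regenerative formula. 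Collecting terms and dividing by $E_z\tau(z)$ produces \eqref{eq47}. All of these interchanges of summation and conditional expectation are legitimate because \eqref{eq44} supplies the absolute bound $E_z\sum_{j,k<\tau(z)}|f_c(X_j)||f_c(X_k)|<\infty$, which in particular shows $g_zf_c\in L^1(\pi)$.

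The main obstacle is the second-moment estimate \eqref{eq44}: one must recognize that the correct weight for the outer drift inequality is $(|f|+1)v_1$ and then marry the strong Markov representation of the inner excursion sum with the two applications of Proposition \ref{prop2}. The delicate bookkeeping lies in checking that the residual sums $\sum_{k=j}^{\tau(z)-1}h(X_k)$ and $\sum_{k=j}^{\tau(z)-1}f_c(X_k)$ are correctly identified with $H(X_j)$ and $g_z(X_j)$ on $\{\tau(z)>j\}$, and in noting that it is precisely the absolute condition \eqref{eq44}, rather than the weaker \eqref{eq41}, that licenses the tower/Fubini manipulations underlying \eqref{eq47}.
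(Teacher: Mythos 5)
Your proposal is correct and follows essentially the same route as the paper: Proposition \ref{prop2} applied to \eqref{eq45} gives positive recurrence, $f\in L^1(\pi)$, and the excursion bound $E_x\sum_{j=0}^{\tau(z)-1}(|f(X_j)|+1)\le v_1(x)+c_1$; Proposition \ref{prop2} applied to \eqref{eq46} controls $E_x\sum_{j=0}^{\tau(z)-1}(|f(X_j)|+1)v_1(X_j)$; the squared excursion sum is bounded by $2\sum_j h(X_j)\sum_{k\ge j}h(X_k)$ with the strong Markov property collapsing the inner sum, yielding \eqref{eq44}; and \eqref{eq47} comes from the same regenerative expansion identifying $E_z[\sum_{k=j}^{\tau(z)-1}f_c(X_k)\mid\mathcal{F}_j]$ with $g_z(X_j)$ followed by the ratio formula. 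Your write-up is in fact slightly more explicit than the paper's on two points the paper leaves implicit: the passage from the bound on $\sum_j(|f(X_j)|+1)$ to the $f_c$-version \eqref{eq44} via $|f_c|\le(1+|\pi f|)(|f|+1)$, and the integrability ($g_zf_c\in L^1(\pi)$) justifying the Fubini/tower manipulations behind \eqref{eq47}.
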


\begin{proof} Proposition \ref{prop2} and \eqref{eq45} imply that $X$ is positive recurrent with $f\in L^1(\pi)$, and
\begin{align*}
E_x (\tau(z) + \sum_{j=0}^{\tau(z)-1}|f(X_j)|) \leq v_1(x)+c_1
\end{align*}
for $x\in S,z\in K,$ and some $c_1\in\mathbb R_+$. On the other hand, Proposition \ref{prop2} and \eqref{eq46} imply that there exists $c_2\in \mathbb R_+$ for which
\begin{align*}
E_x\sum_{j=0}^{\tau(z)-1}|f(X_j)+1|v_1(X_j)  \leq v_2(x)+c_2.
\end{align*}
Consequently, 
\begin{align*}
E_x(\sum_{j=0}^{\tau(z)-1} |f(X_j)+1|)^2&\leq 2 E_x\sum_{j=0}^{\tau(z)-1}|f(X_j)+1|\sum_{k=j}^{\tau(z)-1}(|f(X_k)|+1)\\
&\leq 2E_x \sum_{j=0}^{\tau(z)-1}(|f(X_j)+1|)(v_1(X_j)+c_1)\\
&\leq 2(v_2(x)+c_2) + c_1(v_1(x)+c_1)<\infty
\end{align*}
for $x\in S$, proving \eqref{eq44}.

To prove \eqref{eq47}, note that in the presence of \eqref{eq44}, we see that
\begin{align*}
E_z(\sum_{j=0}^{\tau(z)-1}f_c(X_j))^2 &= 2E_z\sum_{j=0}^{\tau(z)-1}f_c(X_j)\sum_{k=j}^{\tau(z)-1}f_c(X_k)- E_z\sum_{j=0}^{\tau(z)-1}f_c(X_j)^2\\
&=2E_z \sum_{j=0}^{\tau(z)-1}f_c(X_j) g_z(X_j) - E_z\sum_{j=0}^{\tau(z)-1}f_c^2(X_j)\\
&= E_z \tau(z)(2E_\pi f_c(X_0)g_z(X_0)-E_\pi f_c(X_0)^2),
\end{align*}
establishing the result. 
\end{proof}

\begin{remark} This Lyapunov criterion for Markov chain CLT weakens the existing criterion of \citet{glynnLiapounovBoundSolutions1996}. In particular, when one writes $S_n(f)$ in terms of the martingale \eqref{eq28}, one is naturally led to consideration of the associated martingale differences given by
\begin{align*}
D_i = g_z(X_i) - (Pg_z)(X_{i-1})
\end{align*}
for $i\geq 1$. In order that $D_i\in L^2(\pi)$, it seems appropriate to demand that $g_z\in L^2(\pi)$; see Theorem 4.1 of \citet{glynnLiapounovBoundSolutions1996}. This is also the condition used in earlier work by \citet{maigretTheoremeLimiteCentrale1978} on the Markov chain CLT. However,  the expression \eqref{eq47} makes clear that the key requirement in the Markov chain CLT is $g_zf_c\in L^1(\pi)$. This is effectively what \eqref{eq46} is verifying.

To see that use of \eqref{eq46} gives better conditions than does \citet{glynnLiapounovBoundSolutions1996}, consider the Markov chain defined by
\begin{align}
X_{n+1}=[X_n+Z_{n+1}]^+\label{eq48}
\end{align}
where $[x]^+=\max(x,0)$ and the $Z_i$'s are independent and identically distributed (iid) integer-valued rv's with $-\infty<EZ_1<0$. As is well-known, this Markov chain arises naturally in the modeling of queues. Use of Theorem 4.1 in that paper for $f(x)=x$ leads to the requirement that $E|Z_1|^5<\infty$. On the other hand, our Theorem \ref{thm7} requires only that $EZ_1^4<\infty$, thereby weakening the moment requirement on $Z_1$. (Use $v_1(x)=ax^2$ and $v_2(x)=a'x^4$ for suitably chosen $a$ and $a'$.)

We now argue that $EZ_1^4<\infty$ is the natural condition that arises in connection to the CLT for the Markov chain defined by \eqref{eq48}. We first note that $X_n$ is a non-decreasing function of the independent rv's $X_0,Z_1,...,Z_n$. Consequently, the $X_n$'s are associated rv's; see \citet{barlowStatisticalTheoryReliability1975}, p.29-31. Hence,
\begin{align*}
E_\pi(X_0\land r)(X_n\land r) \geq E_\pi X_0\land r \cdot E_\pi X_n\land r
\end{align*}
for $r\geq 0$. Sending $r\rightarrow\infty$, we conclude that
\begin{align*}
E_\pi X_0X_n \geq E_\pi X_0\cdot E_\pi X_n
\end{align*}
via the Monotone Convergence Theorem. If the CLT holds, this implies the existence of a finite non-negative $\alpha\in \mathbb R$ such that $n^{-1}\sum_{j=0}^{n-1}X_j\overset{p}{\rightarrow}\alpha$ as $n\rightarrow\infty$. Because the $X_j$'s are non-negative, $\alpha$ must equal $E_\pi X_0$. So, $E_\pi X_0<\infty$ and
\begin{align*}
c_n \overset{\Delta}{=}E_\pi X_0X_n - E_\pi X_0\cdot E_\pi X_n \geq 0
\end{align*}
for $n\geq 0$. Another truncation and monotone convergence argument establishes that 
\begin{align}
\frac{1}{n}E_\pi (\sum_{j=0}^{n-1}(X_j-E_\pi X_0))^2 = c_0 + 2\sum_{j=1}^{n}\frac{(n-j)}{n}c_j.\label{eq49}
\end{align}
Since
\begin{align*}
\frac{1}{2}\sum_{j=0}^{\lfloor n/2\rfloor}c_j \leq c_0 + 2 \sum_{j=1}^{n}\frac{(n-j)}{n}c_j \leq c_0+ 2\sum_{j=1}^{\infty}c_j,
\end{align*}
it is evident that the left-hand side of \eqref{eq49} has a finite limit if and only if
\begin{align}
c_0 + 2\sum_{j=1}^{\infty}c_j <\infty.\label{eq410}
\end{align}
According to \citet{daleySerialCorrelationCoefficients1968}, \eqref{eq410} holds (in the presence of a blanket assumption that $E|Z_1|^3<\infty$) if and only if $EZ_1^4<\infty$. Of course, demanding that the left-hand side of \eqref{eq49} have a limit follows whenever the CLT \eqref{eq42} is in force and the square of the left-hand side of \eqref{eq42} is $P_\pi$-uniformly integrable.

\begin{remark} Another indication that $EZ_1^4$ is a necessary condition for the CLT in this setting is that when one allows the $Z_i$'s to be continuous rv's, \citet{glynnPoissonEquationRecurrent1994} explicitly solves the associated Poisson's equation for the $M/G/1$ queue. In that setting, the solution to Poisson's equation for $f$ is quadratic, so that $g_zf_c$ is cubic. As a result, it is well known that $EZ_1^4$ is necessary in order that $g_zf_c$ be $\pi$-integrable (\citet{asmussenAppliedProbabilityQueues2008}), and hence that $\sigma^2$ as given by \eqref{eq47} be finite.

\end{remark}

We now turn to the law of the iterated logarithm for $S_n(f)$. The following is an immediate consequence of Theorem \ref{thm7} above and Theorem 5, p.106 of \citet{chungMarkovChains1967}.

\begin{theorem}\label{thm8} Under the same hypotheses as in Theorem \ref{thm7},
\begin{align*}
\varlimsup_{n\rightarrow\infty}\left|\frac{S_n(f)-n\pi f}{\sqrt{n\log\log n}}\right|=|\sigma|\qquad P_x\  \text{a.s.}
\end{align*}
for each $x\in S$, where $\sigma^2$ is as given in Theorem \ref{thm7}.
\end{theorem}

\citet{glynnLimitTheoremsCumulative1993} provide necessary and sufficient conditions for the LIL when the process is regenerative. The condition is stronger than \eqref{eq41}, but slightly weaker than \eqref{eq42}. So, Theorem \ref{thm8} gives close to optimal conditions for the LIL.
\end{remark}

\bibliographystyle{newapa-and}
\bibliography{Markov-static}

\end{document}